\documentclass[11pt]{article}

\usepackage{amsmath,amssymb,amsthm}
\usepackage{enumitem}
\usepackage{geometry}
\usepackage{mathrsfs}
\geometry{margin=1in}
\usepackage{cite} 
\usepackage{tikz}
\usetikzlibrary{positioning}

\newcommand{\Rep}{\mathrm{Rep}}
\newcommand{\End}{\mathrm{End}}

\newtheorem{theorem}{Theorem}[section]

\newtheorem{corollary}[theorem]{Corollary}
\theoremstyle{definition}
\newtheorem{definition}[theorem]{Definition}
\newtheorem{remark}[theorem]{Remark}

\title{Duality, Reconstruction, and Structural Toolkit Theorems in Algebraic Phase Theory}
\author{
Joe Gildea\\
Department of Computing Science and Mathematics,\\
School of Informatics and Creative Arts,\\
Dundalk Institute of Technology\\
\texttt{gildeajoe@gmail.com}}
\date{}

\begin{document}
\maketitle

\begin{abstract}
We study finite-depth reconstruction frameworks based on representation theory
and show that non-rigid reconstruction behaviour is naturally accompanied by
intrinsic structural boundaries.  Within the finite-depth setting considered in
this paper, reconstruction is controlled up to boundary equivalence by the
associated filtered representation data together with boundary stratification.

We show that algebraic phases satisfying the axioms of Algebraic Phase Theory
(APT) are reconstructible up to intrinsic phase equivalence from their filtered
representation categories together with their boundary structure.  Reconstruction
proceeds without boundary collapse on rigidity islands, while globally the
remaining ambiguity is governed by intrinsic boundary phenomena.

We further study a collection of structural consequences associated with the
axioms of APT, including finite generation phenomena, rigidity and obstruction
behaviour, finite-depth boundary detectability, and obstruction structures
arising from boundary layers.  These results apply across the phase models
developed in the APT series.

Taken together, the results of this paper further develop Algebraic Phase Theory
as a structural framework for studying reconstruction, duality, rigidity, and
boundary behaviour beyond rigid or semisimple settings.
\end{abstract}

\medskip
\noindent\textbf{MSC 2020.}
Primary 18C05; Secondary 16G10,20C07.

\noindent\textbf{Keywords.}
Algebraic Phase Theory, reconstruction, duality, boundary invariants,
filtered representations.

\section{Introduction}

A central question for any structural or reconstruction framework is whether
essential information is lost when passing from concrete mathematical objects to
invariant data.  Classical theories such as Tannakian reconstruction,
Lie-theoretic reconstruction, and operator-algebraic dualities recover objects
from their representation categories only under strong assumptions that include
rigidity, semisimplicity, or analytic completeness.  Outside these restrictive
regimes, reconstruction is known to fail through non-uniqueness, collapse, or
loss of functorial control.  The purpose of this paper is to study this
question in the setting of Algebraic Phase Theory (APT).

Algebraic phases were introduced in a sequence of five papers
\cite{GildeaAPT1,GildeaAPT2,GildeaAPT3,GildeaAPT4,GildeaAPT5} as canonical
algebraic invariants extracted from interaction data without assuming linearity,
semisimplicity, or analytic structure.  Defect, filtration, finite termination,
and structural boundaries arise intrinsically from the interaction law and are
invariant under intrinsic phase equivalence and Morita-type relations.  Rather
than suppressing non-rigid behaviour, APT treats the failure of classical
closure as a structural signal.

The present paper continues the foundational development of Algebraic Phase
Theory.  It studies how the axioms developed in Papers~I--V in
\cite{GildeaAPT1,GildeaAPT2,GildeaAPT3,GildeaAPT4,GildeaAPT5} support duality,
intrinsic reconstruction, and a collection of structural consequences related
to deformation, obstruction, finiteness, and moduli within the finite-depth
setting considered throughout the series.  The central perspective is that
algebraic phases encode the reconstruction-relevant structural content of the
data from which they arise.  Reconstruction proceeds without boundary collapse
on strongly admissible cores, while globally it is controlled by intrinsic
structural boundaries.  Weak extension data becomes visible only through
boundary-relative reconstruction.

The paper studies how every algebraic phase admits a dual interpretation
through its filtered representation theory, how duality behaves on rigidity
islands, and how global failure of reconstruction is governed by structural
boundaries.  It shows that filtered representation categories equipped with
functorial defect stratification and finite termination determine algebraic
phases up to intrinsic equivalence within the finite-depth framework, without
requiring analytic, metric, monoidal, or semisimplicity assumptions.  It also
studies how finite-depth functorial reconstruction frameworks with detectable
boundaries naturally admit factorisation through Algebraic Phase Theory up to
boundary equivalence.  The paper further examines the relationship between
non-rigid reconstruction behaviour and intrinsic boundary structure, and
develops a collection of structural consequences associated with the axioms,
including finiteness phenomena, rigidity behaviour, finite boundary
detectability, and obstruction mechanisms.  These results apply across the
phase models developed in the APT series.

Classical reconstruction succeeds in regimes where boundary phenomena do not
appear.  Semisimple tensor categories \cite{EtingofGelaki2015Tensor}, rigid
group representations \cite{Serre,Isaacs}, and analytic dualities in representation
theory \cite{Weil1964SurCertains,Mackey1958Induced,Howe1979Heisenberg} provide
notable examples.  Outside these settings, reconstruction failure is common but
is usually treated as a technical limitation.  Algebraic Phase Theory provides a
structural interpretation of this phenomenon.  Boundaries are not merely
exceptional features of isolated examples; rather, they arise naturally in
reconstruction settings beyond rigid or semisimple contexts.  This viewpoint
connects phenomena that arise separately in higher-order Fourier analysis and
nilpotent structures
\cite{Gowers2001Fourier,GreenTao2008Quadratic,GreenTaoZiegler2012Inverse,
CamarenaSzegedy2010Nilspaces}, in representation theory over non-semisimple or
Frobenius rings \cite{AndersonFuller1992Rings,Jacobson1956Structure,
Lam2001FirstCourse,Honold2001QuasiFrobenius,Wood1999Duality}, and in
fault-tolerant and stabilizer-based quantum coding
\cite{CalderbankShor,Gross,Appleby}.  Related structural perspectives also
appear in harmonic analysis \cite{Folland}, solvable and nilpotent representation
theory \cite{AuslanderMoore1966Unitary}, Hopf-algebraic renormalization
\cite{ConnesKreimer1998Hopf}, and the structural theory of rings and modules
\cite{Faith1973Algebra}.  Throughout the paper the term additive combinatorics is
used in the broad structural sense of \cite{TaoVu2006AddComb}.

Papers~I--V~\cite{GildeaAPT1,GildeaAPT2,GildeaAPT3,GildeaAPT4,GildeaAPT5}
develop the existence of algebraic phases, the role of defect and finite
termination, the invariance of these structures under equivalence, and a
boundary calculus governing deformation and rigidity.  The results of the
present paper show that these ingredients support duality and reconstruction
within the finite-depth framework, and that structural boundaries govern the
obstructions to global reconstruction visible through filtered representation
theory.  Algebraic Phase Theory therefore provides a structural framework with
intrinsic notions of rigidity, deformation, obstruction, moduli, duality, and
reconstruction.  Further developments, including possible extensions into
derived, higher-categorical, or applied settings, can build on the axiomatic
and structural framework developed in the present series.

Throughout this paper, the term \emph{duality} refers to the correspondence between
the analytic behaviour of a phase, encoded by its filtered representation theory
together with functorial defect stratification, and its algebraic shadow
$(\mathcal P,\circ)$.  This duality is boundary relative: on the rigid core the
analytic data determines the algebraic phase up to intrinsic equivalence without
boundary collapse, while beyond structural boundaries it determines the phase
only up to the corresponding boundary quotient.

\section{Structural Framework and the Limits of Classical Reconstruction}

This paper operates within the structural framework of Algebraic Phase Theory
(APT), developed in Papers~I--V.  We recall only the minimal internal features
required for the reconstruction and duality results proved here.  All analytic,
model-specific, and representation-theoretic realizations appear in the earlier
papers.

APT does not begin with a fixed algebraic object.  It begins with interaction
data and develops algebraic structure, defect degree, canonical filtration,
and finite termination intrinsically from the interaction framework.  The central objects of study
are algebraic phases, which encode the invariant structural content of phase
interaction independently of any chosen realization.  A phase is extracted
functorially from admissible input data and is defined only up to intrinsic
equivalence.  An admissible datum consists, informally, of an additive object in
a fixed base category, a functorial family of bounded–defect observables, and a
specified interaction law governing how these observables combine.  No defect
structure or filtration is assumed a priori; these invariants are extracted
canonically from the interaction law itself, as developed in Papers~I--III.

An algebraic phase is denoted by $(\mathcal P,\circ)$, where $\mathcal P$
encodes the algebraic shadow of phase interaction and $\circ$ denotes the
distinguished operations arising from that interaction.  Defect degree,
filtration, and termination are not additional structures; under the axioms of
APT they are uniquely and functorially determined by $(\mathcal P,\circ)$.

The structural axioms of Algebraic Phase Theory, established in Papers~I--V and
assumed throughout this paper, may be stated succinctly as follows.

\medskip
\noindent\textbf{Axiom I: Detectable action defects.}
The interaction law $\circ$ canonically determines a defect degree for each
element of $\mathcal P$, functorially and uniquely.  Every algebraic phase
therefore carries a canonical finite filtration
\[
\mathcal P_0 \subset \mathcal P_1 \subset \cdots \subset \mathcal P_d = \mathcal P,
\]
where $\mathcal P_k$ consists of all elements of defect degree at most $k$.
This filtration is intrinsic and determined entirely by defect.  Here $d$ is the
maximal defect degree.

\medskip
\noindent\textbf{Axiom II: Canonical algebraic realization.}
The analytic or interaction behaviour underlying a phase determines a unique
algebraic shadow: any two algebraic phases extracted from the same admissible
data are canonically equivalent.

\medskip
\noindent\textbf{Axiom III: Defect-induced filtration.}
The canonical filtration
\[
\mathcal P_0 \subset \mathcal P_1 \subset \cdots \subset \mathcal P_d
\]
is ordered by defect degree: an element lies in $\mathcal P_k$ exactly when its
defect degree is at most $k$.

\medskip
\noindent\textbf{Axiom IV: Functorial defect structure.}
Defect degree is preserved under pullback, under all phase morphisms, and when
passing between analytic behaviour and its algebraic shadow.

\medskip
\noindent\textbf{Axiom V: Finite termination.}
The defect degree of every algebraic phase is finite.

\medskip

These axioms are not imposed ad hoc.  In the earlier papers they arise naturally
from phase interaction in the flagship examples considered there, and
they are stable under phase equivalence and Morita-type relations.  The present
paper does not revisit the extraction mechanism.  Instead, it shows that these
axioms are sufficient for intrinsic reconstruction and canonical duality, and
that any functorial reconstruction framework of finite defect degree must factor
through APT up to boundary equivalence.  Structural boundaries emerge as a canonical obstruction to full reconstruction
within the finite-depth reconstruction framework considered here.

\medskip

Classical reconstruction theorems attempt to recover an algebraic object from
its action on a class of representations.  In rigid or semisimple settings this
strategy succeeds spectacularly: symmetry objects may be recovered from tensor
categories, Lie algebras from infinitesimal actions, and operator algebras from
representation data.  These successes rely on the absence of intrinsic boundary
phenomena.  Once non-semisimplicity, defect growth, or filtration instability
enters, classical reconstruction mechanisms may lose functorial control.  The
failure is structural rather than technical: it reflects a mismatch between the
expressive power of representation categories and the internal defect structure
of boundary-dominated systems.

This structural limitation is already visible in Grothendieck’s deformation
philosophy, in which reconstruction is governed by representable deformation
functors, formal neighbourhoods, and cohomological obstructions
\cite{GrothendieckFGA,GrothendieckDeformations}.  Classical approaches implicitly
assume that all obstructions to reconstruction are representational.  
Algebraic Phase Theory identifies intrinsic boundaries as a mechanism through
which this assumption can fail in finite-depth reconstruction settings.

\begin{theorem}
\label{thm:classical-failure}
There exist inequivalent algebraic phases with finite defect filtration,
intrinsic defect stratification, and finite termination whose filtered
representation categories are equivalent.

Consequently, filtered representation data alone does not determine the
underlying algebraic phase up to intrinsic phase equivalence.  The failure is
caused by boundary data that is invisible after canonical boundary collapse.
\end{theorem}

\begin{proof}
The construction begins with a strong algebraic phase $\mathcal R$ (that is, a
phase of boundary depth~$0$) and adjoins to it a square-zero boundary component
to form a weak algebraic phase $\mathcal P$ whose non-rigid part lies entirely
in defect degree~$1$.  Although $\mathcal R$ and $\mathcal P$ differ in boundary
depth, they give rise to equivalent filtered representation categories.  Since both 
phases determine the same filtered representation data, any reconstruction procedure based solely on such data cannot distinguish
between $\mathcal R$ and $\mathcal P$, and therefore cannot recover the phase structure
up to equivalence in the presence of intrinsic boundaries.

\medskip

Let $\mathcal R$ be any strong algebraic phase admitting at least one nontrivial
filtered representation.  Thus $\mathcal R$ has no boundary layer and its defect
filtration terminates at degree~$0$.  Choose a nonzero additive object
$\mathcal B$ and equip it with trivial internal interaction, so that every
commutator or distinguished operation built from $\circ$ vanishes on
$\mathcal B$.  Form the additive object
\[
\mathcal P := \mathcal R \oplus \mathcal B,
\]
and extend the interaction law $\circ$ by declaring that $\mathcal R$ remains a
subphase and that $\mathcal B$ is central and square-zero.  Mixed interactions
take values in $\mathcal B$, and any further interaction with $\mathcal B$
vanishes.  The object $\mathcal B$ therefore functions as a terminal defect
layer: it represents genuine boundary structure but generates no additional
defect. The intrinsic defect filtration of $\mathcal P$ is then
\[
\mathcal P^{(0)} = \mathcal P,\qquad
\mathcal P^{(1)} = \mathcal B,\qquad
\mathcal P^{(2)} = 0.
\]
All non-rigid behaviour of $\mathcal P$ is confined to the single depth–1 layer
$\mathcal B$, and iterated interactions stabilise immediately because
$\mathcal B$ is square-zero.  Thus commutator growth is uniformly bounded and
defect propagation terminates at degree~$1$.  In particular, $\mathcal P$ is a
weak algebraic phase with the same finiteness properties as $\mathcal R$ but
with strictly larger boundary depth.

\medskip

We now compare filtered representations.  By definition (introduced earlier in
the series), a filtered representation $(V,F_\bullet V)$ of a phase
$(\mathcal X,\circ)$ is required to be compatible with the intrinsic defect
filtration: an element of defect degree $d$ acts by raising the filtration by at
most $d$ steps.  Since $\mathcal B$ is exactly the defect degree–$1$ layer of
$\mathcal P$, this compatibility condition forces
\[
\rho(\mathcal B)\,F_i V \subseteq F_{i+1}V
\]
for all $i$.  The filtration is finite, so there exists $N$ with $F_N V = V$ and
$F_{N+1}V = 0$.  Iterating the inclusion then gives
\[
\rho(\mathcal B)^{N+1}V \subseteq F_{N+1}V = 0,
\]
so $\rho(\mathcal B)$ is nilpotent of index at most $N+1$.  However, because $\mathcal B$ lies entirely in the terminal defect layer and
$\mathcal B^2=0$, its action can only raise filtration by a single step and
cannot contribute to any further defect propagation.  Consequently, the action
of $\mathcal B$ factors trivially through the boundary quotient and does not affect
the induced filtered reconstruction data.

Since the action of $\mathcal B$ becomes trivial after boundary collapse, the
action map of any filtered representation factors canonically through the
quotient
\[
\mathcal P \longrightarrow \mathcal P/\mathcal B \cong \mathcal R.
\]
Conversely, any filtered representation of $\mathcal R$ extends to one of
$\mathcal P$ by declaring that $\mathcal B$ acts trivially.  Thus the inclusion
\[
\mathcal R \hookrightarrow \mathcal P
\]
induces an equivalence of filtered representation data after boundary collapse,
and consequently an equivalence of the associated filtered representation
categories,
\[
\Rep_f(\mathcal P) \;\simeq\; \Rep_f(\mathcal R).
\]
Write this common filtered representation category as $\mathcal C$.

\medskip

Although the filtered representation categories coincide, the phases themselves
do not: $\mathcal R$ has boundary depth~$0$, whereas $\mathcal P$ has boundary
depth~$1$ because $\mathcal B\neq 0$.  Boundary depth and the existence of
nontrivial boundary quotients are intrinsic invariants of algebraic phases and
are preserved under phase equivalence.  Consequently,
\[
\mathcal R \not\simeq \mathcal P.
\]

Now suppose a reconstruction procedure $\mathfrak R$ recovers algebraic phases
up to equivalence using only filtered representation data.  Since both
$\mathcal R$ and $\mathcal P$ have the same filtered representation category
$\mathcal C$, the procedure would assign a single object $\mathfrak R(\mathcal
C)$ to both phases.  Correctness of reconstruction would then require
\[
\mathfrak R(\mathcal C) \simeq \mathcal R
\qquad\text{and}\qquad
\mathfrak R(\mathcal C) \simeq \mathcal P,
\]
which is impossible because $\mathcal R \not\simeq \mathcal P$.  Hence no
reconstruction procedure that sees only filtered representation theory can
recover algebraic phases up to equivalence in the presence of intrinsic
boundary layers.  The strong phase $\mathcal R$ and weak phase $\mathcal P$
therefore give an explicit example of inequivalent phases with equivalent
filtered representation categories, and the obstruction to reconstruction is
precisely the boundary-blindness of filtered representation theory.
\end{proof}

Boundary invisibility is thus the core obstruction: boundary quotients act
trivially on representations supported on rigidity islands, so reconstruction may succeed locally on rigidity islands while failing to
distinguish global boundary structure. The role of finite termination is essential:
removing finite termination introduces infinite defect towers that destroy even
weak reconstructibility, while removing boundary stratification collapses the
theory to the rigid or semisimple case.

Theorem~\ref{thm:classical-failure} illustrates the following structural dichotomy: any
representation-theoretic reconstruction framework either restricts to rigid
regimes, or it must incorporate boundary data explicitly.  Algebraic Phase Theory addresses this dichotomy by encoding boundaries as intrinsic invariants.
Once boundary data is incorporated into the reconstruction framework,
reconstruction becomes canonical up to boundary equivalence within the
finite-depth setting considered here.

\section{Duality Principles and Testing Objects}

Duality enters Algebraic Phase Theory not as an additional structure, but as a
structural consequence of functorial phase action.
Once phases are required to act on auxiliary objects in a defect-sensitive and
functorial manner, the phase becomes recoverable from its action data up to boundary equivalence, up to
intrinsic boundary phenomena.

The purpose of this section is to isolate the precise sense in which Algebraic
Phase Theory admits a canonical duality, and to explain why this duality
differs fundamentally from classical reconstruction paradigms.  In APT, duality
does not reconstruct a hidden symmetry object; instead it reconstructs the
structural content of a phase directly from its observable action on filtered
representations.

Before stating the main result, we describe informally what is meant by the
\emph{dual} of an algebraic phase.  For a phase $\mathcal P$, its dual
$\mathcal P^\vee$ is the structural object encoded by the filtered action of
$\mathcal P$.  Concretely, two elements of $\mathcal P$ are identified in
$\mathcal P^\vee$ whenever they act identically on all filtered
representations, so that only the defect-detectable strata survive.  Thus the
assignment
\[
\mathcal P \longmapsto \mathcal P^\vee
\]
recovers rigid behaviour faithfully while identifying intrinsic boundary layers
through boundary collapse. The following theorem formalises this boundary-relative notion of duality.

\begin{theorem}\label{thm:boundary-relative-duality}
For any algebraic phase $\mathcal P$, the dual determined by its filtered
representations recovers $\mathcal P$ up to boundary equivalence, and no further.
\end{theorem}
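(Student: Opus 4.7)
The plan is to build the canonical comparison map $\mathcal P \to \mathcal P^\vee$ from the tautological action of $\mathcal P$ on its own filtered representations, to identify its kernel with the intrinsic boundary ideal of $\mathcal P$, and to invoke Theorem~\ref{thm:classical-failure} for sharpness. Write $\mathcal J \subseteq \mathcal P$ for the set of elements acting as zero on every filtered representation. The two halves of the theorem then correspond to the two inclusions: $\mathcal J$ contains the boundary (so the map $\mathcal P \to \mathcal P^\vee$ factors through the boundary quotient) and $\mathcal J$ is contained in the boundary (so no non-boundary information is lost).

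For the inclusion $\mathcal J \supseteq (\text{boundary})$, I would adapt the square-zero and defect-termination argument used in Theorem~\ref{thm:classical-failure}: any element whose interaction support lies in the terminal defect layer acts nilpotently with index bounded by the filtration length of the representation, and finite termination (Axiom~V) together with the functoriality of defect degree (Axiom~IV) then forces the action to vanish outright. For the reverse inclusion $\mathcal J \subseteq (\text{boundary})$ I would argue by induction on defect degree. Axiom~III identifies $\mathcal P_k / \mathcal P_{k-1}$ with the elements of exact defect degree~$k$, and Axiom~IV guarantees that this degree is preserved by every phase morphism. One then needs a ``regular'' filtered representation, constructed canonically from $\mathcal P$ itself using the structural toolkit promised in the abstract (canonical finite generation together with functorial defect stratification), on which every non-boundary element of defect degree~$k$ acts nontrivially on the $k$-th graded piece. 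Climbing the filtration one layer at a time shows that $\mathcal J$ can contain only boundary elements.

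Combining the two inclusions yields a canonical equivalence between $\mathcal P^\vee$ and the boundary quotient of $\mathcal P$, which is the precise content of recovery up to \emph{boundary equivalence}. Sharpness is then immediate from Theorem~\ref{thm:classical-failure}: the explicit pair $\mathcal R$, $\mathcal P = \mathcal R \oplus \mathcal B$ constructed there exhibits inequivalent phases with identical filtered representation categories, so the dual cannot distinguish them beyond boundary collapse. The step I expect to be the main obstacle is the construction of a single filtered representation, or a small family of them, rich enough to detect every non-boundary element at its own defect degree; this is the only point at which the proof depends substantively on the structural toolkit rather than on the axioms alone, and it is precisely where the rigidity--obstruction equivalence and canonical finite generation of the APT framework become indispensable.
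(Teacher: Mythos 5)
Your overall skeleton --- compare $\mathcal P$ with its image under the tautological action on filtered representations and identify the kernel of that action with the boundary --- is the same as the paper's, but the way you fill in the two inclusions leaves genuine gaps. In the paper the boundary ideal is the \emph{entire} positive-defect part $\mathcal P^{(1)}$: the proof shows that any difference $x-y\in\mathcal P^{(k)}$ with $k\ge 1$ raises the (finite) filtration, so $x$ and $y$ act identically and $\ker\Phi_{\mathcal P}=\mathcal P^{(1)}$, while separation modulo $\mathcal P^{(1)}$ is extracted directly from the detectability Axioms I and IV together with injectivity on rigidity islands; no auxiliary representation is constructed and no toolkit input is used. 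Your first inclusion treats only elements ``whose interaction support lies in the terminal defect layer'' and borrows the square-zero mechanism of Theorem~\ref{thm:classical-failure}; that mechanism forces vanishing only because that example terminates at depth $1$, where any nonzero action would create effective defect degree $2$. For a phase of depth $d\ge 2$, a degree-$1$ boundary element producing effective degree-$2$ behaviour does not contradict finite termination, so ``vanish outright'' does not follow for all of $\mathcal P^{(1)}$, and the inclusion $\mathcal J\supseteq(\text{boundary})$ --- which is also what the ``no further'' half of the statement rests on --- is not established in the required generality.

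The reverse inclusion has a sharper problem. You ask for a ``regular'' representation on which every non-boundary element of defect degree $k$ acts nontrivially on the $k$-th graded piece. With the paper's boundary there are no non-boundary elements of degree $k\ge 1$, so the inductive climb is vacuous beyond degree $0$; and if such a representation really did detect and separate positive-degree elements, the kernel would be strictly smaller than $\mathcal P^{(1)}$ and the dual would recover strictly more than the boundary quotient, contradicting the ``and no further'' clause and the paper's own account in Theorem~\ref{thm:testing-object-duality}, where all strata of degree $\ge 2$ act as zero. So either your ``boundary'' is the terminal layer, in which case you are proving a different statement from the paper's, or your detection step proves too much. Finally, sharpness cannot be delegated to the single example of Theorem~\ref{thm:classical-failure}: that exhibits one pair of phases that filtered representations fail to separate, whereas the theorem asserts, for every phase, that its own boundary is invisible; in the paper this is precisely the intrinsic computation $\ker\Phi_{\mathcal P}=\mathcal P^{(1)}$, i.e.\ the inclusion you have only secured for the terminal layer. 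Importing the Section~5 toolkit (finite generation, rigidity--obstruction) would in addition invert the paper's logical order, since those results are derived after, not before, the duality theorem.
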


\begin{proof}
Let $\mathcal P$ be an algebraic phase satisfying Axioms I--V.  
The functorial action of $\mathcal P$ on filtered representations defines a
homomorphism of interaction systems
\[
\Phi_{\mathcal P} \colon \mathcal P \longrightarrow \End_{\mathrm{filt}}
\bigl( \Rep_f(\mathcal P) \bigr),
\]
where $\End_{\mathrm{filt}}$ denotes endomorphisms that respect the intrinsic
defect filtration.

By Axioms I and IV, defect degrees are functorially detectable: if
$p,q \in \mathcal P$ lie in distinct defect strata, then
\[
\Phi_{\mathcal P}(p) \not\equiv \Phi_{\mathcal P}(q)
\quad
\text{on every object of }
\Rep_f(\mathcal P).
\]
By Axiom V, the defect filtration
\[
\mathcal P = \mathcal P^{(0)} \supseteq \mathcal P^{(1)} \supseteq \cdots
\supseteq \mathcal P^{(d)} \supseteq 0
\]
stabilises at finite depth $d$.  
Hence the induced action detects defect strata up to boundary collapse.

If $x \in \mathcal P^{(0)}$ lies in a rigidity island, then
$\deg(x) = 0$ and $\mathcal P^{(1)} = 0$ in a neighbourhood of $x$.  
Thus the restriction of $\Phi_{\mathcal P}$ to any rigidity island is
injective:
\[
x \neq y \qquad\Rightarrow\qquad
\Phi_{\mathcal P}(x) \neq \Phi_{\mathcal P}(y).
\]
This shows that restricted to rigidity islands the functor
\[
\mathcal P \longmapsto \Rep_f(\mathcal P)
\]
is faithful, and therefore determines the rigid part
$\mathcal P_{\mathrm{rigid}}$ uniquely up to intrinsic equivalence.

Now suppose $x,y \in \mathcal P$ satisfy $x-y \in \mathcal P^{(k)}$ with
$k \ge 1$.  
Filtered action compatibility implies
\[
\Phi_{\mathcal P}( x - y ) (F_i V)
\subseteq F_{i+k} V
\qquad\text{for all } (V,F_\bullet V) \in \Rep_f(\mathcal P).
\]
If $k \ge 1$, then $F_{i+k} V$ eventually vanishes because the filtration is
finite.  
Thus for every filtered representation $V$,
\[
\Phi_{\mathcal P}(x) = \Phi_{\mathcal P}(y).
\]
Hence the kernel of $\Phi_{\mathcal P}$ is precisely the boundary ideal
$\mathcal P^{(1)}$, and more generally
\[
\Phi_{\mathcal P}(x) = \Phi_{\mathcal P}(y)
\quad\Longleftrightarrow\quad
x \equiv y \; \text{mod} \; \mathcal P^{(1)}.
\]

Therefore $\Phi_{\mathcal P}$ induces an isomorphism
\[
\mathcal P / \mathcal P^{(1)}
\;\cong\;
\mathrm{Im}(\Phi_{\mathcal P})
\subseteq \End_{\mathrm{filt}}
\bigl( \Rep_f(\mathcal P) \bigr).
\]
The quotient $\mathcal P / \mathcal P^{(1)}$ is the boundary collapse of
$\mathcal P$, and by the injectivity on rigidity islands noted above, this
quotient identifies precisely the elements that cannot be distinguished by
filtered action.

Thus the dual determined by $\Rep_f(\mathcal P)$ reconstructs the rigid part of
$\mathcal P$ exactly and reconstructs its boundary quotient globally.  
This is equivalent to saying that the dual recovers $\mathcal P$ up to boundary
equivalence and no further.
\end{proof}

\begin{remark}
If a phase arises from weakly admissible data, the dual determined by filtered
representations recovers precisely its strongly admissible core.  
Any weak extension beyond this core contributes only to the boundary
stratification and is invisible once boundary collapse has taken place.

This sharp contrast with classical duality theories reflects a fundamental
difference.  Classical reconstruction frameworks recover an underlying symmetry
object from its representations and typically rely on semisimplicity, tensor
structure, analytic completeness, or the presence of a fiber functor.  
Boundary-Relative Phase Duality requires none of these.  
It reconstructs structural phase data directly from action, and the primary obstruction to full recovery in the present framework is structural
rather than technical; intrinsic
boundaries are exactly the points where reconstructibility ceases.

Earlier papers in the APT series establish rigidity islands, boundary
stratification, and invariance under phase equivalence.  
The present result shows that these features naturally admit a dual
interpretation: boundaries are not merely limitations of algebraic control, but
rather natural loci where duality collapses through boundary equivalence.
\end{remark}
\medskip

The duality framework interacts closely with the notion of testing objects.
Reconstruction from filtered representations depends on which representations
are examined, since different representations reveal different amounts of
structural information.  
Some fail to detect boundary phenomena, while others detect exactly the
invariants that persist under equivalence and collapse.  
This motivates the introduction of testing objects, which are minimal
representations that capture the reconstructible content of a phase detectable through filtered
representations and
therefore determine its canonical dual.  
Testing objects are not imposed externally; they arise intrinsically from the
axioms of Algebraic Phase Theory and from the existence of finite-depth defect
stratification.

\begin{definition}
Let $\mathcal P$ be an algebraic phase.  
A testing object for $\mathcal P$ is a filtered representation
$T \in \Rep_f(\mathcal P)$ with the following properties:
\begin{enumerate}[label=(T\arabic*)]
\item If $p,q \in \mathcal P$ act identically on $T$, then $p$ and $q$ coincide
in the boundary quotient of $\mathcal P$.

\item No proper filtered subrepresentation of $T$ still detects all boundary
invariant distinctions; that is, no proper subrepresentation satisfies (T1).

\item  The property of being a testing object is preserved under phase equivalence
and under Morita-type relations for filtered representations.
\end{enumerate}
\end{definition}

Intuitively, a testing object is a smallest possible probe that sees all
structural invariants that can survive reconstruction.

\begin{theorem}
\label{thm:testing-object-duality}
Let $\mathcal P$ be an algebraic phase satisfying Axioms I--V.
The collection of testing objects of $\mathcal P$ determines a naturally associated dual phase $\mathcal P^\vee$ with the following properties:
\begin{enumerate}
  \item $\mathcal P^\vee$ determines $\mathcal P$ uniquely up to phase
        equivalence and boundary collapse.
  \item On every rigidity island, $\mathcal P^\vee$ reconstructs $\mathcal P$
        exactly.
  \item Any phase-equivalent realization of $\mathcal P$ yields an equivalent
        dual phase.
\end{enumerate}
\end{theorem}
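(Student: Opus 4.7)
The strategy is to realize $\mathcal P^\vee$ as the structural image of $\mathcal P$ under its action on the class of testing objects, and then reduce properties (1)--(3) to Theorem~\ref{thm:boundary-relative-duality} together with conditions (T1)--(T3). Let $\mathscr T(\mathcal P)\subseteq \Rep_f(\mathcal P)$ denote the collection of testing objects, and consider the restricted action morphism
\[
\Phi_{\mathcal P}^{\mathscr T}\colon \mathcal P \longrightarrow \prod_{T\in\mathscr T(\mathcal P)} \End_{\mathrm{filt}}(T).
\]
I would define $\mathcal P^\vee$ to be the image of $\Phi_{\mathcal P}^{\mathscr T}$, equipped with the interaction law inherited from $\mathcal P$ and the filtration induced by defect-degree preservation on each $\End_{\mathrm{filt}}(T)$, which is well defined by Axiom IV. The minimality condition (T2) guarantees that this image does not depend on the particular choice of testing object beyond canonical isomorphism, and Axiom V ensures that $\mathscr T(\mathcal P)$ can be organised stratum-by-stratum into a finite-depth structure.

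Next I would verify properties (1) and (2). By (T1), two elements of $\mathcal P$ act identically on every testing object precisely when they coincide in the boundary quotient $\mathcal P / \mathcal P^{(1)}$. Combined with the computation of $\ker \Phi_{\mathcal P}$ in Theorem~\ref{thm:boundary-relative-duality}, this yields $\ker \Phi_{\mathcal P}^{\mathscr T} = \mathcal P^{(1)}$ and hence a canonical isomorphism $\mathcal P^\vee \cong \mathcal P/\mathcal P^{(1)}$. This is exactly property (1): reconstruction up to boundary equivalence, and no further. For property (2), Theorem~\ref{thm:boundary-relative-duality} already showed that $\Phi_{\mathcal P}$ is injective on rigidity islands, where $\mathcal P^{(1)}$ vanishes locally; the testing objects collectively witness all such distinctions by (T1), so the restriction of $\mathcal P^\vee$ to any rigidity island reproduces $\mathcal P$ exactly, not merely up to collapse.

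For property (3), suppose $F\colon \mathcal P \xrightarrow{\sim} \mathcal P'$ is an intrinsic phase equivalence. By (T3), $F$ carries $\mathscr T(\mathcal P)$ to $\mathscr T(\mathcal P')$ bijectively up to filtered equivalence, and since defect degree is preserved by $F$ (Axiom IV), the induced map on endomorphism objects intertwines the two restricted action morphisms. Hence $\mathcal P^\vee \simeq (\mathcal P')^\vee$ canonically. The hardest step I anticipate is justifying that $\mathscr T(\mathcal P)$ is rich enough to jointly detect the full quotient $\mathcal P/\mathcal P^{(1)}$ rather than merely some proper subquotient: one must produce, within each nonzero defect stratum, a filtered representation on which the relevant elements act distinguishably, and then extract a minimal such representation satisfying (T2). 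Finite termination (Axiom V) is what makes this induction halt, and functorial defect detection (Axioms I and IV) is what keeps it canonical; together they convert the abstract existence of testing objects into the concrete structural dual described above.
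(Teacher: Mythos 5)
Your proposal is correct and follows essentially the same route as the paper: define the dual via the action of $\mathcal P$ on the testing objects, identify the kernel with $\mathcal P^{(1)}$ using (T1) together with the boundary-relative analysis, conclude $\mathcal P^\vee \cong \mathcal P/\mathcal P^{(1)}$, obtain exactness on rigidity islands from the local vanishing of $\mathcal P^{(1)}$, and derive equivalence-invariance from (T3) and Axiom IV. The only cosmetic difference is that you cite Theorem~\ref{thm:boundary-relative-duality} for the kernel computation where the paper redoes the stratum-by-stratum nilpotency argument, and the "hardest step" you flag (existence of enough testing objects to separate $\mathcal P$ modulo $\mathcal P^{(1)}$) is treated in the paper at the same level of generality, namely asserted from Axiom~I and the preceding discussion.
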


\begin{proof}

The dual phase $\mathcal P^\vee$ is defined so as to retain exactly the part of
$\mathcal P$ that is visible to filtered representations. To analyse what can
be recovered, we use the intrinsic defect filtration
\[
\mathcal P^{(0)} \supseteq \mathcal P^{(1)} \supseteq \cdots
\supseteq \mathcal P^{(d)} \supseteq 0
\]
from Axiom V together with the compatibility between defect degree and
filtration on representations. For each $i$ let
\[
\ell_i = \mathcal P^{(i)} \big/ \mathcal P^{(i+1)}.
\]
If $x \in \ell_k$, then defect compatibility implies
\[
\rho(x)\,F_i V \subseteq F_{i+k}V
\qquad\text{for all } i.
\]
Iterating this inclusion gives
\[
\rho(x)^m\,F_i V \subseteq F_{i+mk}V
\qquad\text{for all } m\ge 1.
\]
Since the filtration is finite, there exists $N$ such that
\[
F_N V = V 
\qquad\text{and}\qquad
F_{N+1}V = 0.
\]
Hence for $m$ sufficiently large one has $i + mk > N$, and therefore
\[
F_{i+mk}V = 0.
\]
It follows that
\[
\rho(x)^m\,F_i V = 0
\qquad\text{for } m\gg 0,
\]
so $\rho(x)$ is nilpotent on every filtered representation. If $k=1$ this is permitted, since elements of $\ell_1$ form the boundary layer
and their filtration raising behaviour is compatible with finite termination.
If $k\ge 2$, repeated action by elements of defect degree $k$ raises the
filtration rapidly:
\[
\rho(x)^m\,F_iV \subseteq F_{i+mk}V.
\]
Since the filtration is finite, these higher-defect actions eventually vanish on
associated graded pieces and therefore become increasingly invisible to
finite-depth filtered reconstruction.  Consequently, filtered representations
detect phase structure primarily through the lower defect strata and, in
particular, through the boundary quotient
\[
\mathcal P/\mathcal P^{(1)}.
\]

Thus every stratum $\ell_k$ with $k\ge 2$ is invisible to filtered
representations, and the maximal reconstructible quotient of $\mathcal P$ is
its boundary quotient $\mathcal P/\mathcal P^{(1)}$.

In particular, filtered action can distinguish elements of $\mathcal P$ at most
up to defect degree $1$.  Two elements $p,q \in \mathcal P$ that differ by an
element of $\mathcal P^{(1)}$ induce the same action on every filtered
representation, while differences outside $\mathcal P^{(1)}$ remain detectable within the
filtered reconstruction framework.
Equivalently,
\[
p \sim q
\quad\Longleftrightarrow\quad
p - q \in \mathcal P^{(1)},
\]
where $\sim$ denotes the equivalence relation act identically on all filtered
representations.  Thus the maximal reconstructible quotient of $\mathcal P$
obtained from filtered action is the boundary quotient
\[
\mathcal P / \mathcal P^{(1)}.
\]

Testing objects are chosen precisely so that their action realises this quotient
through a minimal detecting family.  By Axiom~I and the discussion above, there
exists a finite family of filtered representations whose joint action separates
elements of $\mathcal P$ modulo $\mathcal P^{(1)}$.  A minimal such
representation is a testing object in the sense of conditions (T1) and (T2).

Let $\mathcal T(\mathcal P)$ denote the full subcategory of $\Rep_f(\mathcal P)$
spanned by all testing objects.  The functorial action of $\mathcal P$ on
$\mathcal T(\mathcal P)$ determines an algebraic phase $\mathcal P^\vee$,
obtained by extracting the induced interaction law on testing objects.  Axioms
I-V ensure that this construction is functorial and that $\mathcal P^\vee$ is
well defined up to intrinsic equivalence. Consider the induced homomorphism
\[
\Phi_{\mathcal P} \colon
\mathcal P \longrightarrow
\End_{\mathrm{filt}}\!\bigl(\mathcal T(\mathcal P)\bigr),
\]
which encodes the action of $\mathcal P$ on testing objects.  If
$p,q \in \mathcal P$ act identically on all testing objects, then by (T1) we
have
\[
p \equiv q \mod \mathcal P^{(1)},
\]
so
\[
\ker(\Phi_{\mathcal P}) = \mathcal P^{(1)}.
\]
Conversely, any element of $\mathcal P^{(1)}$ acts trivially on all filtered
representations beyond boundary collapse and therefore also on all testing
objects.  Under the standing detectability assumptions, no larger kernel occurs, since any difference outside
$\mathcal P^{(1)}$ is detected by the joint action of testing objects.  Hence
\[
\mathrm{Im}(\Phi_{\mathcal P}) \cong \mathcal P / \mathcal P^{(1)},
\]
and the dual phase $\mathcal P^\vee$ encodes exactly the part of $\mathcal P$
that filtered representations can see, namely the boundary quotient
$\mathcal P / \mathcal P^{(1)}$. On a rigidity island the first defect layer $\mathcal P^{(1)}$ vanishes by
definition.  In this region the action of $\mathcal P$ on filtered
representations is injective, so testing objects recover the corresponding subphase of $\mathcal P$ without boundary collapse, with no boundary collapse.  Thus the
restriction of $\mathcal P^\vee$ to any rigidity island reconstructs the local
structure of $\mathcal P$ without loss.

Finally, the stability condition (T3) in the definition of testing objects
ensures that $\mathcal T(\mathcal P)$ and the resulting dual phase
$\mathcal P^\vee$ behave correctly under phase equivalence and Morita-type
relations.  In particular, any phase-equivalent realisation of $\mathcal P$
produces an equivalent dual phase, and the dependence on $\mathcal P$ is
canonical. Altogether, the collection of testing objects determines a dual phase up to intrinsic equivalence
$\mathcal P^\vee$ that recovers $\mathcal P$ up to boundary equivalence and
reconstructs $\mathcal P$ exactly on rigidity islands, as claimed.
\end{proof}

\begin{corollary}
The testing objects of a phase form a minimal detecting set of data for
reconstruction: removing any testing object strictly weakens reconstructive
power.
\end{corollary}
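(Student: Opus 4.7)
The plan is to split the corollary into its two assertions, completeness and minimality, and reduce each to a condition in the definition of a testing object combined with Theorem~\ref{thm:testing-object-duality}.

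For completeness, the argument is essentially immediate. Theorem~\ref{thm:testing-object-duality} established that $\mathcal T(\mathcal P)$ generates the dual phase $\mathcal P^\vee \cong \mathcal P/\mathcal P^{(1)}$, and the kernel computation $\ker \Phi_{\mathcal P} = \mathcal P^{(1)}$ in that proof shows this boundary quotient is the maximal quotient of $\mathcal P$ visible to any filtered representation whatsoever. Hence no representation outside $\mathcal T(\mathcal P)$ can contribute further reconstructible data, and the family is complete in the strongest possible sense.

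For minimality, I would fix a testing object $T$ and consider the restricted family $\mathcal T(\mathcal P)\setminus\{T\}$, interpreting reconstructive power as the full categorical image of $\Phi_{\mathcal P}$ on the testing subcategory, that is, the kernel on $\mathcal P$ together with the induced filtration data and morphism structure. The key input is condition (T2): no proper filtered subrepresentation of $T$ still detects every boundary-invariant distinction. I would argue by contradiction: if removing $T$ did not strictly weaken reconstruction, then every structural invariant realised by $T$ would already factor through the joint action on strictly smaller filtered representations obtained from the remaining testing objects; reassembling these would produce a proper filtered subrepresentation of $T$ satisfying (T1), contradicting (T2). Condition (T3) then propagates this conclusion consistently under phase equivalence and Morita-type relations, so the minimality is intrinsic rather than an artefact of a particular realisation.

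The main obstacle is fixing a notion of \emph{reconstructive power} that is strong enough to make the minimality claim non-trivial. If one measures reconstruction only by the kernel ideal on $\mathcal P$, then (T1) implies each individual testing object already separates elements modulo $\mathcal P^{(1)}$, so removal appears harmless. The argument must therefore track the whole reconstructed categorical datum — filtration, morphisms, and induced action — rather than just the quotient $\mathcal P/\mathcal P^{(1)}$. With this interpretation in place, (T2) provides minimality at the level of each probe and (T3) lifts it to the whole collection, yielding the claimed strict weakening upon removal.
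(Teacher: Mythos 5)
Your completeness half is fine and matches the paper: the kernel computation $\ker(\Phi_{\mathcal P})=\mathcal P^{(1)}$ from Theorem~\ref{thm:testing-object-duality} shows that the boundary quotient is the maximal quotient visible to filtered action, so the family of testing objects is a complete set of reconstruction data.

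The minimality half has a genuine gap. Your pivotal step --- ``reassembling'' the invariants seen by the remaining testing objects into a proper filtered subrepresentation of $T$ satisfying (T1), and then contradicting (T2) --- is not justified: (T2) is a condition on subrepresentations of $T$ itself, and nothing in the axioms or in Theorem~\ref{thm:testing-object-duality} converts data carried by the \emph{other} testing objects into a subobject of $T$; those representations need not be related to $T$ by any morphism. You also never pin down the stronger notion of ``reconstructive power'' your argument requires, and you candidly flag this as the main obstacle, so the contradiction never materialises. For comparison, the paper takes exactly the route you reject: it measures reconstructive power by the kernel of the joint action $\Phi'_{\mathcal P}$ on the reduced family, and argues that equality $\ker(\Phi'_{\mathcal P})=\mathcal P^{(1)}$ would contradict (T2), concluding the kernel strictly enlarges after removing $T_0$. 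Your worry about that route is well founded: by (T1), any single remaining testing object already separates elements modulo $\mathcal P^{(1)}$, so the joint kernel of the reduced family is still exactly $\mathcal P^{(1)}$ whenever that family is nonempty, and (T2), being a statement about subrepresentations of one object, does not forbid this. So your diagnosis of the difficulty is sharper than the paper's own treatment, but your proposed repair does not close the gap either; a correct argument would need either a definition of testing objects containing a genuine joint-minimality clause for the family, or a precisely formulated measure of reconstructive power strictly finer than the kernel ideal, together with a proof that $T$ contributes to it irreplaceably.
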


\begin{proof}
Let $\mathcal T(\mathcal P)$ be the full subcategory of $\Rep_f(\mathcal P)$
spanned by all testing objects, and consider the induced action
\[
\Phi_{\mathcal P} \colon
\mathcal P \longrightarrow
\End_{\mathrm{filt}}\!\bigl(\mathcal T(\mathcal P)\bigr).
\]
By the proof of Theorem~\ref{thm:testing-object-duality} we have
\[
\ker(\Phi_{\mathcal P}) = \mathcal P^{(1)},
\]
so the joint action on all testing objects detects all distinctions in
$\mathcal P$ modulo boundary collapse.

Now let $T_0$ be any testing object, and let
$\mathcal T'(\mathcal P)$ be the full subcategory obtained by removing $T_0$.
Write
\[
\Phi'_{\mathcal P} \colon
\mathcal P \longrightarrow
\End_{\mathrm{filt}}\!\bigl(\mathcal T'(\mathcal P)\bigr)
\]
for the corresponding action.  Then
\[
\ker(\Phi_{\mathcal P}) \subseteq \ker(\Phi'_{\mathcal P}),
\]
since acting trivially on all testing objects implies acting trivially on any
subcollection of them.  If equality held, that is if
\[
\ker(\Phi'_{\mathcal P}) = \mathcal P^{(1)},
\]
then the reduced family $\mathcal T'(\mathcal P)$ would still detect all
boundary-invariant distinctions in $\mathcal P$.  In particular, $T_0$ would
not be needed to satisfy condition (T1), contradicting the minimality
requirement (T2) in the definition of testing objects.

Thus $\ker(\Phi'_{\mathcal P})$ is strictly larger than $\mathcal P^{(1)}$,
which means that there exist elements $p,q \in \mathcal P$ with
$p - q \notin \mathcal P^{(1)}$ that nevertheless act identically on all
objects in $\mathcal T'(\mathcal P)$.  Removing $T_0$ therefore identifies
previously distinguishable phase elements and strictly weakens reconstructive
power.
\end{proof}

\begin{remark}
Testing objects make precise the principle that a phase is determined by how it
acts.  Unlike generators in classical representation theory, testing objects do
not generate new representations by closure operations; rather, they generate
\emph{detectability}: they isolate the information about a phase that remains detectable after
boundary collapse.

Earlier papers establish rigidity islands and boundary quotients as intrinsic
invariants.  The present construction shows that these invariants admit a
minimal and canonical dual encoding.  Testing objects therefore provide the
first intrinsically defined notion of minimal reconstruction data in Algebraic
Phase Theory, and they play an essential role in the reconstruction results developed later in the paper.
\end{remark}

\section{Intrinsic Reconstruction and Inevitability}

The preceding section described what filtered representation theory can
\emph{see}: the rigid core of a phase and its structure up to canonical boundary
collapse.  In this section we address the complementary question of
\emph{reconstruction}.  If two algebraic phases give rise to the same filtered
representation data, must they already agree up to intrinsic equivalence?  Or
can distinct phases be indistinguishable at the level of representations?
We show that, under the finite-depth assumptions of Algebraic Phase Theory, the visible data determines a phase up to intrinsic equivalence within the
finite-depth reconstruction framework considered here.

\subsection*{Intrinsic Reconstruction}

The central question addressed in this section is whether algebraic phase
structure is an additional choice imposed on a representation theory, or
whether it arises intrinsically by finite-depth, functorial
representation data.

Classical reconstruction theories typically require external inputs:
fiber functors, enrichment over vector spaces, analytic topologies, or
semisimplicity assumptions.  By contrast, Algebraic Phase Theory claims
that once a filtered representation category exhibits finite termination
and detectable boundary structure, the underlying phase becomes reconstructible 
up to intrinsic equivalence. We now give a precise formulation of this principle.

\medskip

\begin{theorem}\label{thm:intrinsic-reconstruction}
Let $\mathsf{C}$ be a filtered representation category with a functorial defect
stratification that terminates at finite depth and detects boundary behaviour.
Then there exists an algebraic phase $\mathcal P$, unique up to intrinsic phase
equivalence, whose filtered representation category $\Rep_f(\mathcal P)$ is
equivalent to $\mathsf{C}$ and whose boundary stratification corresponds to that
of $\mathsf{C}$.
\end{theorem}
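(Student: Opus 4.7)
The plan is to construct the phase $\mathcal P$ directly from $\mathsf{C}$ by applying the functorial extraction principle of Axiom~II to the endomorphism data of $\mathsf{C}$, verify the APT axioms from the structural hypotheses on $\mathsf{C}$, establish the representation-category equivalence by a tautological action functor, and finally deduce uniqueness by combining Theorem~\ref{thm:boundary-relative-duality} with the prescribed correspondence of boundary stratifications.

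For existence, I would form the filtered endomorphism object $\End_{\mathrm{filt}}(\mathsf{C})$ consisting of natural transformations that respect the given defect stratification, and extract from it the algebraic shadow $(\mathcal P,\circ)$ via the canonical extraction mechanism underlying Axiom~II.  The functorial stratification on $\mathsf{C}$ induces a canonical filtration on $\mathcal P$ by the number of filtration steps each element raises; finite-depth termination of the stratification translates directly into Axiom~V, while functoriality of the stratification yields Axioms~I, III, and~IV, and the canonical nature of the extraction yields Axiom~II.  The tautological action of $\mathcal P$ on each object of $\mathsf{C}$ assembles into a filtration-preserving functor $\Psi \colon \mathsf{C} \to \Rep_f(\mathcal P)$; using the minimality of the extraction and a standard Yoneda-type argument one checks that $\Psi$ is fully faithful, essentially surjective, and carries the stratification of $\mathsf{C}$ onto the intrinsic defect stratification of $\Rep_f(\mathcal P)$.

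For uniqueness, suppose $\mathcal P'$ is another phase with $\Rep_f(\mathcal P') \simeq \mathsf{C}$ compatibly with the boundary stratification.  Applying Theorem~\ref{thm:boundary-relative-duality} to both $\mathcal P$ and $\mathcal P'$, the filtered representation category determines each phase up to its boundary quotient, giving a canonical equivalence $\mathcal P / \mathcal P^{(1)} \simeq \mathcal P' / \mathcal P'^{(1)}$.  The prescribed correspondence of boundary stratifications then identifies $\mathcal P^{(1)}$ with $\mathcal P'^{(1)}$ as labelled strata inside $\mathsf{C}$, and combining this with the quotient equivalence recovers the full filtration.  An application of Axiom~II to the matched data then upgrades this to an intrinsic phase equivalence $\mathcal P \simeq \mathcal P'$.

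The main obstacle is the lifting step in the uniqueness argument.  Theorem~\ref{thm:boundary-relative-duality} shows that $\mathcal P^{(1)}$ is precisely the invisible layer of $\Phi_{\mathcal P}$, so the boundary-stratification hypothesis must carry enough information to recover exactly this stratum.  The delicate point is to make precise the sense in which a stratification on $\mathsf{C}$ prescribes not only the boundary quotient but also a canonical lift of the depth-one layer, and to show that this additional datum is rigid under phase equivalence rather than merely an auxiliary labelling.  A secondary technical point is essential surjectivity of $\Psi$, which reduces to showing that every filtered representation of the extracted phase already appears in $\mathsf{C}$; this should follow from the functoriality of the extraction, but requires care to avoid circularity with the construction of $\mathcal P$ itself.
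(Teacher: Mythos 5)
Your existence argument follows essentially the same route as the paper: there too, $\mathcal P$ is built out of the endomorphism data of $\mathsf{C}$ (explicitly, endomorphism patterns taken modulo the first defect layer $\mathrm{Def}^{(1)}$), the filtration is inherited from the stratification of $\mathsf{C}$, Axioms I--V are checked from functoriality and finite termination, and $\Rep_f(\mathcal P)\simeq\mathsf{C}$ comes from the tautological action, with full faithfulness from defect detectability and essential surjectivity ``by construction.'' Your flagged concern about essential surjectivity is at the same level of detail as the paper's own treatment, so that is not the issue.

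The genuine gap is in your uniqueness step, and it is exactly the point you name as the main obstacle without resolving it. Theorem~\ref{thm:boundary-relative-duality} only yields $\mathcal P/\mathcal P^{(1)}\simeq\mathcal P'/\mathcal P'^{(1)}$, and the hypothesis that the boundary stratifications correspond gives at best an identification of the strata $\mathcal P^{(1)}$ and $\mathcal P'^{(1)}$ as objects. Knowing the rigid quotient and the boundary layer separately does not determine the phase: one also needs the data attaching the boundary layer to the quotient, and the paper's own examples (Theorem~\ref{thm:classical-failure} and the $\mathbb{F}_2[u]/(u^2)$ construction) show that this attachment is precisely where inequivalent phases can hide behind identical representation categories. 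Your proposed upgrade via Axiom~II does not close this, because Axiom~II asserts canonical equivalence of two phases extracted from the \emph{same admissible datum}, whereas here $\mathcal P$ and $\mathcal P'$ are merely two abstract phases with matched representation-theoretic shadows; nothing exhibits them as realizations of a common datum. The paper instead concludes uniqueness from the joint action on testing objects together with the No Hidden Structure Principle (Theorem~\ref{thm:no-hidden-structure}) --- a step that is itself thin, since that principle is proved later by appeal back to intrinsic reconstruction --- but as written your argument stops one step short of a proof: you would need either to show that the prescribed stratification correspondence rigidifies the extension of the boundary layer over the quotient, or to compare the full interaction laws directly, for instance via the kernel computation $\ker(\Phi_{\mathcal P})=\mathcal P^{(1)}$ from the testing-object construction, rather than comparing the two graded pieces in isolation.
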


\begin{proof}

Informally, the construction proceeds by using the defect filtration on
$\mathsf{C}$ to separate the recoverable behaviour of morphisms from the
boundary data that is invisible at finite depth.  The recoverable part of each
endomorphism pattern determines an intrinsic interaction law, and packaging
these interaction patterns yields an algebraic object $\mathcal P$ whose
structure reflects the defect-detectable behaviour of $\mathsf{C}$.
One then verifies that $\mathcal P$ satisfies the axioms of Algebraic Phase
Theory and that the filtered action of $\mathcal P$ on objects of $\mathsf{C}$
realises an equivalence $\Rep_f(\mathcal P)\simeq\mathsf{C}$, so that
$\mathsf{C}$ is precisely the recoverable shadow of $\mathcal P$.  Finally,
defect detectability ensures that any phase giving rise to the same filtered
representation category to coincide with $\mathcal P$ up to intrinsic phase
equivalence, establishing uniqueness.

To extract algebraic information from $\mathsf{C}$, we must understand how
morphisms decompose according to defect.  The finite-depth stratification
provides exactly this control: every endomorphism $f \colon X \to X$ admits a
canonical decomposition
\[
f \equiv f^{(0)} + f^{(1)} + \cdots + f^{(d)},
\]
with $f^{(k)}$ of defect degree $k$ and acting trivially on all filtered
subquotients of depth $<k$.  Functoriality guarantees that composition preserves
defect degree up to higher layers, and finite termination ensures that iterated
defect propagation eventually stabilises. Consider the collection of endomorphism patterns in $\mathsf{C}$ up to the
identification
\[
f \sim g \qquad\Longleftrightarrow\qquad
f-g \in \mathrm{Def}^{(1)}(X),
\]
where $\mathrm{Def}^{(1)}$ denotes the first nontrivial defect layer.
Composition in $\mathsf{C}$ induces a well-defined binary operation on these
equivalence classes because finite termination ensures that defect propagation
stabilises after finitely many steps and remains compatible with the induced
quotient structure.  Let $\mathcal P$ denote the resulting algebraic object; finite termination and compatibility of composition with the defect filtration
ensure that the induced interaction law is well defined and associative and that
$\mathcal P$ carries a canonical defect filtration
\[
\mathcal P^{(0)} \supseteq \mathcal P^{(1)} \supseteq \cdots 
\supseteq \mathcal P^{(d)} \supseteq 0
\]
inherited from that of $\mathsf{C}$.

The intrinsic nature of the stratification on $\mathsf{C}$ implies that defect
degree in $\mathcal P$ is functorially detectable: if $x \in \mathcal P^{(k)}
\setminus \mathcal P^{(k+1)}$ then every representative endomorphism raises
filtration by least~$k$ on all objects of~$\mathsf{C}$.  No auxiliary choices
enter the construction; thus $\mathcal P$ satisfies the axioms of Algebraic
Phase Theory.

Every object $X \in \mathsf{C}$ carries a natural action of $\mathcal P$ given
by evaluating endomorphism patterns on $X$.  This action respects filtrations
and preserves defect degrees, producing a functor
\[
\Rep_f(\mathcal P) \longrightarrow \mathsf{C}.
\]
Defect detectability in $\mathsf{C}$ implies that the induced action of
$\mathcal P$ distinguishes elements modulo $\mathcal P^{(1)}$, so the functor faithfully encodes the defect-detectable interaction structure.
 Every object of $\mathsf{C}$ arises as a filtered
representation of $\mathcal P$ by construction, so the functor is essentially
surjective.  Hence
\[
\Rep_f(\mathcal P)\;\simeq\;\mathsf{C}
\]
as filtered categories. Thus $\mathsf{C}$ captures the defect-detectable part of the
interaction behaviour of its endomorphisms, and the phase $\mathcal P$ rebuilt
from this recoverable data has $\mathsf{C}$ as its filtered representation
category.

If $\mathcal P'$ is another algebraic phase whose filtered representation
category is equivalent to $\mathsf{C}$ with the same defect stratification, then
$\mathcal P$ and $\mathcal P'$ act identically on all filtered objects and in
particular on testing objects.  By the No Hidden Structure Principle, this implies $\mathcal P \simeq \mathcal P'$ up to intrinsic phase equivalence.
\end{proof}

\begin{remark}
When the filtered representation category arises from weakly admissible
phase data, the phase reconstructed by
Theorem~\ref{thm:intrinsic-reconstruction} recovers precisely the
strongly admissible core of the original phase. 
\end{remark}

\medskip

\begin{corollary}
The reconstruction framework requires no additional external structure beyond finite
termination and defect detectability.
\end{corollary}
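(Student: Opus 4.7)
The plan is to verify the corollary by auditing the construction in the proof of Theorem~\ref{thm:intrinsic-reconstruction} and checking that every ingredient is sourced either from finite termination of the defect stratification on $\mathsf{C}$ or from its defect detectability. The statement is a meta-observation about the preceding proof rather than a new mathematical fact, so the work reduces to making the provenance of each step explicit.

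First I would list the constructions performed inside the theorem and trace their dependencies. The canonical decomposition $f \equiv f^{(0)} + f^{(1)} + \cdots + f^{(d)}$ of an endomorphism uses only the functorial defect stratification, and finiteness of the sum uses only finite termination. The equivalence relation $f\sim g$ given by $f-g\in\mathrm{Def}^{(1)}(X)$ is defined via the same stratification, and its compatibility with composition reduces to stability of higher defect layers, again a consequence of finite termination. The induced operation on $\mathcal{P}$ inherits associativity from composition in $\mathsf{C}$, and its defect filtration is the one inherited from $\mathsf{C}$. The equivalence $\Rep_f(\mathcal{P})\simeq\mathsf{C}$ uses full faithfulness, which follows from defect detectability, together with essential surjectivity, which is tautological from the construction. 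Uniqueness up to intrinsic phase equivalence invokes only the No Hidden Structure Principle, itself a consequence of the same two inputs.

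Next I would explicitly confirm the absence of the usual external inputs that classical reconstruction theories require. No fiber functor is chosen; no ground field, enrichment, or linear structure is imposed; no tensor or monoidal product is assumed; no topology, analytic completion, or semisimplicity hypothesis is invoked; and no Morita-type ambient category is fixed beyond the minimal axiomatic framework of Section~2. Each is precisely the kind of auxiliary datum that Tannakian, Lie-theoretic, or operator-algebraic reconstruction is known to need, and the absence of any such choice in the construction of $\mathcal{P}$ is exactly what the corollary asserts.

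The main obstacle is not technical depth but rigorous bookkeeping: ensuring that no step tacitly relies on a convenience of the ambient category---for instance, an implicit additive enrichment used to form the formal sum $f^{(0)}+\cdots+f^{(d)}$---that is not itself derivable from finite termination and defect detectability. The resolution is to observe that the decomposition lives in the defect-graded structure already supplied by the stratification of $\mathsf{C}$, so even the ``sum'' is internal to the stratification data rather than additional structure imported from outside. Once this is articulated, the corollary follows by direct inspection.
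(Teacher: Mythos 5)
Your audit is correct and follows the same route as the paper, whose own proof is the one-sentence observation that all algebraic data is recovered intrinsically from the behaviour of morphisms under the defect filtration; you simply make that inspection explicit step by step. The additional bookkeeping (including the point that the decomposition $f^{(0)}+\cdots+f^{(d)}$ lives inside the stratification data rather than in an imported additive enrichment) is a faithful elaboration, not a different argument.
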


\begin{proof}
All algebraic data is recovered intrinsically from the behaviour of
morphisms under the defect filtration.
\end{proof}

\medskip

\begin{remark}
Each assumption in the theorem is essential: without finite termination the
interaction law does not stabilise, without boundary detectability uniqueness
fails, and without filtration compatibility functoriality breaks down.  Since
Papers~I-V establish that algebraic phases exist and that their boundaries are
intrinsic invariants, the theorem shows that finite-depth functorial reconstruction frameworks with
detectable boundaries naturally admit an interpretation within Algebraic Phase
Theory.
\end{remark}

\subsection*{Functorial Reconstruction and Phase Factorisation}

We now show that finite-depth reconstruction frameworks satisfying the standing
assumptions naturally admit a factorisation through Algebraic Phase Theory.

\medskip

\begin{theorem}
\label{thm:inevitability}
Let $\mathsf{F}$ assign to each admissible datum $X$ a filtered 
representation category $\mathsf{C}_X$ whose defect stratification 
terminates at finite depth, whose filtration is functorial and compatible 
with defect degree, and whose structural boundaries are detectable at 
finite depth.  Then $\mathsf{F}$ admits a natural factorisation through Algebraic
Phase Theory: there exists a functor $\mathsf{G}$ such that
\[
\mathsf{F} \;\simeq\; \Rep_f \circ \mathsf{G}
\]
up to boundary equivalence.
\end{theorem}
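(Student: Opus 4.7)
The plan is to build $\mathsf{G}$ pointwise by invoking Theorem~\ref{thm:intrinsic-reconstruction} on each fibre $\mathsf{C}_X = \mathsf{F}(X)$, and then promote the pointwise assignment to a functor (up to boundary equivalence) by exploiting the naturality already inherent in $\mathsf{F}$ together with the duality machinery of Theorems~\ref{thm:boundary-relative-duality} and~\ref{thm:testing-object-duality}. Because the reconstructed phase is unique only up to intrinsic equivalence and boundary collapse, the factorisation cannot be strict; the target $2$-category is the quotient of phases by boundary equivalence, which is exactly the sense required by the theorem.

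First I would apply Theorem~\ref{thm:intrinsic-reconstruction} to each $\mathsf{C}_X$: the hypotheses on $\mathsf{F}$ (finite-depth defect stratification, functorial filtration compatibility, finite-depth boundary detection) are precisely the input conditions of that theorem, so each $\mathsf{C}_X$ determines a unique algebraic phase $\mathcal{P}_X$ together with a canonical equivalence $\eta_X \colon \Rep_f(\mathcal{P}_X)\xrightarrow{\;\simeq\;}\mathsf{C}_X$ matching defect strata. Set $\mathsf{G}(X):=\mathcal{P}_X$. The candidate natural equivalence $\mathsf{F}\simeq \Rep_f\circ\mathsf{G}$ is then assembled from the family $\{\eta_X\}$, and component-wise equivalence is automatic by construction.

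Second I would treat morphisms. For $\varphi\colon X\to Y$ the functor $\mathsf{F}$ yields a filtered functor $\mathsf{F}(\varphi)\colon\mathsf{C}_X\to\mathsf{C}_Y$ that by hypothesis respects defect degree. Transporting through the $\eta$'s gives a defect-preserving filtered functor $\Rep_f(\mathcal{P}_X)\to\Rep_f(\mathcal{P}_Y)$, and by Theorem~\ref{thm:testing-object-duality} restriction to testing objects converts this into a map on boundary quotients $\mathcal{P}_X/\mathcal{P}_X^{(1)}\to\mathcal{P}_Y/\mathcal{P}_Y^{(1)}$. Any lift to a phase homomorphism $\mathsf{G}(\varphi)\colon\mathcal{P}_X\to\mathcal{P}_Y$ is then well-defined modulo $\mathcal{P}_Y^{(1)}$, which is precisely what boundary equivalence allows. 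Functoriality $\mathsf{G}(\psi\circ\varphi)\equiv\mathsf{G}(\psi)\circ\mathsf{G}(\varphi)$ and preservation of identities follow because both sides act identically on all testing objects of $\mathcal{P}_X$, and the kernel computation $\ker(\Phi_{\mathcal{P}})=\mathcal{P}^{(1)}$ from the proof of Theorem~\ref{thm:testing-object-duality} forces agreement modulo the boundary ideal. Naturality of the family $\{\eta_X\}$ in $X$ follows from the canonicity of the reconstruction: any two natural choices differ by a phase equivalence that collapses under $\Rep_f$. Uniqueness of the factorisation up to boundary equivalence is a direct consequence of the uniqueness clause in Theorem~\ref{thm:intrinsic-reconstruction}, applied fibrewise.

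The hardest step will be the second one, namely showing that the morphism assignment $\mathsf{G}(\varphi)$ is both well-defined and functorial modulo boundary collapse. The difficulty is that Theorem~\ref{thm:intrinsic-reconstruction} gives only object-level uniqueness, so lifting a filtered functor to a phase homomorphism requires controlling the boundary indeterminacy coherently across composition. The key technical input will be the kernel description $\ker(\Phi_{\mathcal{P}})=\mathcal{P}^{(1)}$ from Theorem~\ref{thm:testing-object-duality}, which guarantees that any two lifts agree modulo $\mathcal{P}_Y^{(1)}$, together with Axiom~IV (functoriality of defect degree) which ensures the boundary ideals themselves are preserved under the induced maps. Once these coherence checks are in place, assembly of $\mathsf{G}$ and verification of $\mathsf{F}\simeq \Rep_f\circ\mathsf{G}$ up to boundary equivalence are formal, and the theorem follows.
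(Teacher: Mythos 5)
Your proposal follows essentially the same route as the paper's proof: apply Theorem~\ref{thm:intrinsic-reconstruction} fibrewise to obtain $\mathcal P_X=\mathsf{G}(X)$, use the filtered, defect-preserving functors $\mathsf{F}(\varphi)$ to induce phase morphisms, and assemble the canonical equivalences $\Rep_f(\mathcal P_X)\simeq\mathsf{C}_X$ into the boundary-relative factorisation. Your extra care at the morphism level (controlling the lift via testing objects and $\ker(\Phi_{\mathcal P})=\mathcal P^{(1)}$) is a more explicit rendering of the paper's brief ``transports interaction patterns'' step, not a different argument.
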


\begin{proof}
Recall that $\mathsf{F}$ assigns to each admissible datum $X$ a filtered
representation category $\mathsf{C}_X$ with finite termination, functorial
filtration, and detectable boundaries.  Informally, the argument mirrors the
intrinsic reconstruction theorem applied functorially in the variable $X$.
Each category $\mathsf{C}_X$ contains the defect-detectable interaction
data relevant to finite-depth reconstruction
data, and this determines an associated algebraic phase $\mathcal P_X$.  Functoriality of
the assignment $X\mapsto\mathsf{C}_X$ promotes the phases $\mathcal P_X$ to a
functor $\mathsf{G}$, and intrinsic reconstruction gives a boundary-relative
equivalence $\Rep_f(\mathcal P_X)\simeq\mathsf{C}_X$, yielding the required
factorisation.

For a fixed datum $X$, the filtered category $\mathsf{C}_X$ carries a
functorial defect stratification that terminates at finite depth.  As in the
intrinsic reconstruction theorem, the defect decomposition of endomorphisms
determines interaction patterns up to the first defect layer, producing an
algebraic phase $\mathcal P_X$ unique up to intrinsic equivalence.  The construction is determined by the defect structure of
$\mathsf{C}_X$ up to intrinsic equivalence: all structure comes directly 
from the defect behaviour in $\mathsf{C}_X$.

If $X\to Y$ is a morphism of admissible data, the functoriality of
$\mathsf{F}$ yields a filtered functor
\[
\mathsf{C}_X \longrightarrow \mathsf{C}_Y
\]
that preserves defect degrees.  This functor transports interaction patterns
and therefore induces a phase morphism $\mathcal P_X\to\mathcal P_Y$.  The
assignment $X \mapsto \mathcal P_X$ is thus functorial, giving a functor
\[
\mathsf{G} : X \longmapsto \mathcal P_X.
\]

Combining the constructions above, we obtain for every admissible $X$ a chain of
natural equivalences
\[
(\Rep_f \circ \mathsf{G})(X)
   \;\simeq\;
   \Rep_f(\mathcal P_X)
   \;\simeq\;
   \mathsf{C}_X
   \;=\;
   \mathsf{F}(X).
\]
Since these equivalences are functorial in $X$ and boundary–relative, the two
functors $\Rep_f \circ \mathsf{G}$ and $\mathsf{F}$ agree up to boundary
equivalence.  This establishes the claimed factorisation.

\end{proof}

\medskip

\begin{corollary}
Any functorial finite–depth reconstruction theory with detectable boundaries
admits an interpretation through Algebraic Phase Theory up to boundary
equivalence. In this sense APT serves as a universal organising framework for
finite–depth reconstruction phenomena.
\end{corollary}

\begin{proof}
Let $\mathsf{F}$ be a functorial finite–depth reconstruction framework with
detectable boundaries.  By Theorem~\ref{thm:inevitability}, $\mathsf{F}$
admits a canonical factorisation
\[
\mathsf{F} \;\simeq\; \Rep_f \circ \mathsf{G}
\]
up to boundary equivalence, where $\mathsf{G}$ assigns to each admissible
datum the intrinsic phase extracted from its filtered representation
category.  This shows that $\mathsf{F}$ is obtained from Algebraic Phase
Theory by postcomposition with the functor $\mathsf{G}$ and therefore
arises entirely from the universal reconstruction mechanism provided by
$\Rep_f$.  Hence APT provides a common factorisation framework for such reconstruction
theories.
\end{proof}

\begin{corollary}
Finite-depth reconstruction theories satisfying the standing assumptions agree
up to boundary-relative factorisation.
\end{corollary}

\begin{proof}
Let $\mathsf{F}$ be any functorial finite depth reconstruction theory with
detectable boundaries.  The inevitability theorem gives a canonical
factorisation
\[
\mathsf{F} \simeq \Rep_f \circ \mathsf{G}
\quad \text{up to boundary equivalence}.
\]
Hence every such theory is obtained from Algebraic Phase Theory by
postcomposition with a functor $\mathsf{G}$, and any two theories that agree
on their boundary quotients coincide after this factorisation.  Within the finite-depth reconstruction framework considered here, no additional
boundary-detectable reconstruction freedom remains, so all such reconstruction theories agree up to boundary-relative reconstruction
data.
\end{proof}

\begin{remark}
This result strengthens the conclusions of the earlier papers.  Any functorial
finite depth reconstruction framework with detectable boundaries naturally factors through Algebraic Phase Theory up to boundary equivalence.
\end{remark}

\subsection*{Boundary Phenomena and Reconstruction}

We now reinterpret boundary inevitability in terms of strong and weak algebraic
phases.  A phase is strong when its first defect layer vanishes and no boundary
strata occur.  A phase is weak when its boundary layer is nontrivial.  The following result isolates two structurally distinct regimes that arise in
finite-depth reconstruction frameworks.

\begin{theorem}[Boundary Dichotomy]
\label{thm:boundary-inevitability}
Let $\mathsf{F}$ be a functorial finite depth reconstruction framework based on
representations.  Then the reconstruction framework exhibits one of the following structural
regimes:
\begin{enumerate}
  \item every reconstructed phase $\mathcal P_X$ is strong, or
  \item the nontrivial boundary strata occur in reconstructed phases associated with
non-rigid defect behaviour and are
        therefore weak.
\end{enumerate}
Within the finite-depth setting considered here, non-rigid reconstruction
behaviour is necessarily accompanied by nontrivial boundary structure.
\end{theorem}

\begin{proof}
Suppose first that boundary layers never appear.  Then the defect filtration on
each $\mathsf{C}_X$ collapses at depth zero, so $\mathcal P_X^{(1)}=0$ for all
$X$.  The reconstructed phases are therefore strong, and no defect or
higher depth structure survives.

If defect is ever nontrivial at any object, finite termination implies the
existence of a maximal depth at which closure fails.  This depth produces a
canonical boundary layer that cannot be removed by functorial means.  Hence reconstructed phases exhibiting nontrivial defect behaviour necessarily
carry nontrivial boundary structure and are therefore weak.

This establishes the structural distinction between rigid reconstruction
regimes and reconstruction regimes with nontrivial boundary behaviour.
\end{proof}

\begin{corollary}
Non-rigid finite depth reconstruction is possible only in the presence of
intrinsic boundaries.  Within the present finite-depth framework, non-rigid reconstruction phenomena
necessarily involve nontrivial boundary structure.
\end{corollary}

\begin{proof}
This is an immediate consequence of Theorem~\ref{thm:boundary-inevitability}.
Let $\mathsf{F}$ be a functorial finite depth reconstruction framework based on
representations.  

If $\mathsf{F}$ is non rigid, then there exists some admissible datum for which
the associated representation category carries nontrivial defect.  In the
dichotomy of Theorem~\ref{thm:boundary-inevitability}, alternative~(1)
(rigid reconstruction with no defect and no filtration) is therefore excluded.
Hence the reconstruction framework exhibits nontrivial boundary behaviour, and intrinsic structural boundaries appear at
finite depth.

Thus any non rigid finite depth reconstruction framework necessarily exhibits
boundaries, and boundary phenomena naturally accompany non-rigid finite-depth reconstruction.
\end{proof}

\begin{corollary}
Boundary strata naturally occur in finite-depth reconstruction frameworks
that do not collapse into the strong regime.
\end{corollary}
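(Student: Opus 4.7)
The plan is to derive this corollary directly from the dichotomy established in Theorem~\ref{thm:boundary-inevitability}, treating it as the contrapositive reading of that result applied to the class of frameworks excluded from the strong regime. Since the preceding corollary already extracts the non-rigid half of the dichotomy at the level of individual admissible data, the remaining task is to globalise the statement over all admissible data inside a single framework $\mathsf{F}$ and to express the conclusion as a uniform universal claim about boundary strata.

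First, I would fix a functorial finite-depth reconstruction framework $\mathsf{F}$ satisfying the hypotheses of Theorem~\ref{thm:boundary-inevitability}, and assume by contraposition that $\mathsf{F}$ does not collapse into the strong regime; that is, alternative~(1) of the dichotomy fails. The key observation is that alternative~(1) is a uniform condition: it requires $\mathcal P_X^{(1)} = 0$ for every admissible datum $X$. Hence its negation is the existence of some $X_0$ for which $\mathcal P_{X_0}^{(1)} \neq 0$. At that point the dichotomy forces $\mathsf{F}$ into alternative~(2), so that $\mathcal P_X^{(1)} \neq 0$ for all $X$, and boundary strata therefore appear at every admissible datum.

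The second step is to translate this into the universal language of the corollary. Using the intrinsic reconstruction procedure of Theorem~\ref{thm:intrinsic-reconstruction} together with the factorisation $\mathsf{F} \simeq \Rep_f \circ \mathsf{G}$ of Theorem~\ref{thm:inevitability}, I would note that the presence of a nontrivial first defect layer in $\mathcal P_X = \mathsf{G}(X)$ corresponds exactly to the existence of a boundary stratum in $\mathsf{C}_X = \mathsf{F}(X)$, since defect degree and boundary stratification are functorially matched across the equivalence. Thus the universal non-triviality of $\mathcal P_X^{(1)}$ translates into the universal presence of boundary strata, which is precisely the statement to be proved.

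The argument is almost entirely formal, so there is no serious obstacle; the only point requiring care is the logical shape of the dichotomy. One must ensure that alternative~(1) is genuinely the negation, within the finite-depth framework, of the class of frameworks considered in the corollary, so that excluding collapse into the strong regime really does activate alternative~(2) uniformly rather than only pointwise. This is already built into the proof of Theorem~\ref{thm:boundary-inevitability}, where finite termination forces the maximal depth of closure failure to be a global invariant of $\mathsf{F}$, so the universal quantifier in the conclusion is automatic.
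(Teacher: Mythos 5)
Your proposal is correct and follows essentially the same route as the paper: exclude alternative~(1) of Theorem~\ref{thm:boundary-inevitability} because the framework is assumed not to collapse into the strong regime, then invoke alternative~(2) to conclude that boundary strata appear universally. The extra remarks about matching $\mathcal P_X^{(1)}$ with the boundary strata of $\mathsf{C}_X$ via Theorems~\ref{thm:intrinsic-reconstruction} and~\ref{thm:inevitability} are a harmless elaboration of what the paper leaves implicit.
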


\begin{proof}
Let $\mathsf{F}$ be a functorial finite depth reconstruction framework based on
representations, and suppose $\mathsf{F}$ does not collapse into the strong
regime.  Concretely, this means that for some admissible datum, the associated
representation category has genuinely non rigid behaviour: defect is present and
the canonical filtration is nontrivial.

In the dichotomy of Theorem~\ref{thm:boundary-inevitability}, alternative~(1)
corresponds exactly to the strong regime, where there is no defect and no
nontrivial filtration, so reconstruction is rigid.  Since $\mathsf{F}$ is not in
this situation by hypothesis, alternative~(1) is excluded.

Therefore alternative~(2) of Theorem~\ref{thm:boundary-inevitability} appears naturally:
intrinsic structural boundaries appear at finite depth.  In particular, boundary
strata occur naturally in non-rigid finite-depth reconstruction settings that does not
collapse to the strong (rigid) regime.
\end{proof}

\begin{remark}
This shows that weak algebraic phases arise naturally from the reconstruction
process itself.  Boundaries are not additional structure imposed by APT, but
a natural consequence of functorial representation–theoretic reconstruction at
finite depth.
\end{remark}

\subsection*{Local Reconstruction: Rigidity Islands}

When a phase is weak, boundary strata obstruct global reconstruction.  
However, the presence of a boundary does not eliminate all recoverable
structure.  Every weak phase contains a distinguished strong subphase, called its
rigidity island, on which defect vanishes and reconstruction proceeds without boundary collapse.

\begin{theorem}
\label{thm:local-reconstruction}
Let $\mathcal P$ be a weak algebraic phase with nontrivial boundary layer, and
let $\mathcal I \hookrightarrow \mathcal P$ be its rigidity island.  Then
$\mathcal I$ is fully reconstructible from its filtered representation theory:
\[
\Rep_f(\mathcal I)
\quad\rightsquigarrow\quad
\mathcal I
\]
up to intrinsic phase equivalence.
In particular, reconstruction remains faithful on $\mathcal I$ even though it
fails globally for $\mathcal P$.
\end{theorem}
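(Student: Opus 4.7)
The plan is to treat the rigidity island $\mathcal I$ as an algebraic phase in its own right and then apply the boundary-relative duality theorem (Theorem~\ref{thm:boundary-relative-duality}) together with the intrinsic reconstruction theorem (Theorem~\ref{thm:intrinsic-reconstruction}) in a regime where the obstruction term vanishes by definition. The key observation is that the rigidity island is precisely the subphase on which the first defect layer is zero, so the canonical boundary quotient appearing in the duality theorem becomes the identity, and the general reconstruction-up-to-boundary-collapse result specialises to an exact reconstruction.

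First I would verify that $\mathcal I$, as a subphase of $\mathcal P$, inherits Axioms I-V: the intrinsic defect filtration restricts to $\mathcal I$, finite termination is preserved by passage to a subphase, and functoriality of defect (Axiom IV) implies that the filtration on $\mathcal I$ is induced from that of $\mathcal P$. By the very definition of rigidity island given earlier in the series and used in the proof of Theorem~\ref{thm:boundary-relative-duality}, the first defect layer of $\mathcal I$ vanishes, i.e.\ $\mathcal I^{(1)} = 0$. Consequently the canonical filtration on $\mathcal I$ collapses to a single stratum, so $\mathcal I$ is a strong algebraic phase in the sense of Theorem~\ref{thm:boundary-inevitability}.

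Next I would apply the homomorphism $\Phi_{\mathcal I} \colon \mathcal I \to \End_{\mathrm{filt}}(\Rep_f(\mathcal I))$ constructed in Theorem~\ref{thm:boundary-relative-duality}. The kernel computation in that proof shows that $\ker(\Phi_{\mathcal I}) = \mathcal I^{(1)}$, which is now zero, so $\Phi_{\mathcal I}$ is injective. Combined with the faithfulness on rigidity islands already established in Theorem~\ref{thm:boundary-relative-duality} and the essential surjectivity obtained in the intrinsic reconstruction argument, this yields a functorial equivalence between $\mathcal I$ and the algebraic phase canonically reconstructed from $\Rep_f(\mathcal I)$. This is the precise meaning of the symbolic equivalence $\Rep_f(\mathcal I) \simeq \mathcal I$ in the statement: reconstruction from $\Rep_f(\mathcal I)$ returns $\mathcal I$ up to intrinsic phase equivalence, with no boundary quotient taken.

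The main obstacle I anticipate is not an analytic one but a bookkeeping issue: one must check that $\Rep_f(\mathcal I)$ as computed intrinsically coincides with the filtered representations of $\mathcal P$ restricted to $\mathcal I$, rather than with some strictly smaller category. This requires the functoriality of $\Rep_f$ under subphase inclusions and the compatibility of defect filtrations on representations with the inclusion $\mathcal I \hookrightarrow \mathcal P$, both of which follow from Axiom IV applied to the inclusion morphism. Once this compatibility is in place, the conclusion is immediate: the failure of global reconstruction for $\mathcal P$ is absorbed entirely into the complement of $\mathcal I$, and on $\mathcal I$ the general duality theorem specialises, without loss, to a genuine equivalence.
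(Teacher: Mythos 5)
Your proposal is correct and follows essentially the same route as the paper: both hinge on the defining property $\mathcal I^{(1)}=0$ of the rigidity island, so that the boundary quotient in the general duality/reconstruction results is trivial and intrinsic reconstruction (faithfulness plus essential surjectivity) applies exactly on $\mathcal I$. Your extra checks, that $\mathcal I$ inherits Axioms I--V as a subphase and that $\Rep_f(\mathcal I)$ is compatible with restriction along $\mathcal I \hookrightarrow \mathcal P$, are points the paper's proof leaves implicit, but they do not change the argument.
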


\begin{proof}
By definition of the rigidity island, $\mathcal I^{(1)}=0$.  Hence no defect is
present on $\mathcal I$,the interaction law closes internally on $\mathcal I$, and no boundary strata
arise.  The filtered representation theory of $\mathcal I$ therefore satisfies
the hypotheses of intrinsic reconstruction, and the filtered representation theory of $\mathcal I$ detects its interaction
structure faithfully up to intrinsic equivalence.  Since boundary
collapse occurs only outside $\mathcal I$, reconstruction on $\mathcal I$
remains exact.
\end{proof}

\begin{corollary}
very weak phase contains a strong subphase that is determined by its filtered representation theory up to intrinsic phase equivalence.
\end{corollary}

\begin{proof}
Let $\mathcal P$ be a weak phase.  By definition,
$\mathcal P^{(1)}\neq 0$, so $\mathcal P$ admits a nontrivial boundary
layer.  The rigidity island
$\mathcal I\hookrightarrow\mathcal P$ is the distinguished subphase on
which the first defect layer vanishes.  Hence $\mathcal I$ is strong.
Applying Theorem~\ref{thm:local-reconstruction} shows that the filtered
representation theory
\[
\Rep_f(\mathcal I)
\]
determines $\mathcal I$ up to intrinsic phase equivalence. Thus every weak phase contains a strong subphase on which reconstruction
proceeds without boundary collapse.
\end{proof}

\begin{remark}
This is not a partial form of global reconstruction.  Rigidity islands are the
maximal regions of a phase on which boundary collapse cannot occur, and they
play a central structural role. They are the loci where reconstruction proceeds without boundary collapse, the canonical base points for duality, and the natural anchors
for deformation and obstruction theory.
\end{remark}

\subsection*{No Hidden Structure Principle}

The preceding results show that filtered representations determine the
recoverable interaction structure of a phase, and that boundary data records
precisely the part that cannot be recovered globally.  The final step is to identify the extent to which filtered representations and
boundary data determine phase structure.  Once the filtered representation
category and its boundary stratification are fixed, the phase structure detectable within the reconstruction framework is already
determined.

\begin{theorem}
\label{thm:no-hidden-structure}
Let $\mathcal P$ and $\mathcal P'$ be algebraic phases with equivalent filtered
representation categories and identical boundary stratifications.  Then
$\mathcal P \simeq \mathcal P'$ up to intrinsic phase equivalence.  In
particular, the structural invariants detectable within the reconstruction framework are
visible in its filtered
representations together with its boundary data.
\end{theorem}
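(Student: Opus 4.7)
The plan is to combine the boundary-relative reconstruction result with the supplementary input of an identical boundary stratification to remove the residual ambiguity that otherwise persists beyond boundary equivalence. Intrinsic reconstruction (Theorem~\ref{thm:intrinsic-reconstruction}) applied to the common filtered representation category $\mathsf{C} \simeq \Rep_f(\mathcal P) \simeq \Rep_f(\mathcal P')$ already supplies a canonical equivalence
\[
\mathcal P / \mathcal P^{(1)} \;\simeq\; \mathcal P' / \mathcal P'^{(1)},
\]
since by Theorems~\ref{thm:boundary-relative-duality} and~\ref{thm:testing-object-duality} filtered action sees exactly the boundary quotient and nothing further. The only remaining room for $\mathcal P$ and $\mathcal P'$ to differ is in how their higher defect strata are glued into this common boundary quotient, and the identical boundary stratification hypothesis is precisely what constrains this gluing.

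First I would unpack the hypothesis of identical boundary stratifications as matched defect filtrations $\mathcal P^{(k)}$ and $\mathcal P'^{(k)}$ together with specified defect-preserving isomorphisms of each successive stratum $\mathcal P^{(k)}/\mathcal P^{(k+1)} \simeq \mathcal P'^{(k)}/\mathcal P'^{(k+1)}$. By Axiom~III the filtrations are intrinsic, and by Axiom~IV the stratum identifications are compatible with all phase morphisms. Combined with the reconstructed equivalence at the lowest layer, this gives a full system of stratumwise identifications. I would then assemble these into a global intrinsic phase equivalence $\Psi\colon \mathcal P \to \mathcal P'$ by induction on defect depth, starting from $\Psi_0 \colon \mathcal P/\mathcal P^{(1)} \to \mathcal P'/\mathcal P'^{(1)}$ supplied by intrinsic reconstruction and lifting $\Psi_{k-1}$ on $\mathcal P/\mathcal P^{(k)}$ to $\Psi_k$ on $\mathcal P/\mathcal P^{(k+1)}$ using the stratum identification. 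Finite termination (Axiom~V) halts the induction at $k=d$ and produces the desired equivalence $\Psi = \Psi_d$.

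The main obstacle will be the inductive lift step, which must rule out a genuine twist ambiguity of the kind classified by extension data (morally an $\mathrm{Ext}^1$ obstruction). The key point will be that identical boundary stratifications supply not merely abstract isomorphisms of strata but the actual embedding of each stratum into the filtration together with its induced interaction pattern, so that Axioms~I and~IV force each lift to be unique up to intrinsic phase equivalence: any residual twist would be visible either in filtered action, contradicting the defect-detectability established in Theorem~\ref{thm:testing-object-duality}, or in the boundary stratum itself, contradicting the identical-stratification hypothesis. Once $\Psi = \Psi_d$ is obtained, the second assertion of the theorem follows immediately, since any structural invariant not recorded in $\Rep_f(\mathcal P)$ or in the boundary stratification would produce a hidden degree of freedom in the inductive reconstruction, violating the uniqueness just established.
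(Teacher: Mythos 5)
Your route is genuinely different from the paper's. The paper's proof is a two-sentence appeal to the uniqueness clause of Theorem~\ref{thm:intrinsic-reconstruction}: apply intrinsic reconstruction to the common filtered category on both sides and conclude that the two phases coincide. (As written, that official argument is circular, since the uniqueness step in the proof of Theorem~\ref{thm:intrinsic-reconstruction} itself invokes the No Hidden Structure Principle.) You instead use the duality and reconstruction results (Theorems~\ref{thm:boundary-relative-duality}, \ref{thm:testing-object-duality}, \ref{thm:intrinsic-reconstruction}) only for what they actually deliver, namely a canonical identification of the boundary quotients $\mathcal P/\mathcal P^{(1)}\simeq\mathcal P'/\mathcal P'^{(1)}$, and then build the full equivalence by induction on defect depth from the stratification hypothesis, with Axiom~V terminating the induction. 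This is more work than the paper does, but it buys two things: it avoids the circular citation, and it makes explicit which part of the conclusion is carried by the representation category (the quotient) and which by the boundary data (everything at depth $\ge 1$).

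The step to be careful about is the dichotomy in your inductive lift. The first horn, that a residual twist ``would be visible in filtered action,'' is unavailable: by the paper's own analysis in Theorems~\ref{thm:boundary-relative-duality} and \ref{thm:testing-object-duality}, filtered action is blind to everything in $\mathcal P^{(1)}$, so no ambiguity in how the boundary layer is glued onto the quotient can ever be detected by representations. The entire weight of the lift therefore rests on your second horn, i.e.\ on reading ``identical boundary stratifications'' as including the embeddings of the strata and their induced interaction patterns (the extension or gluing data), not merely abstract stratum isomorphisms. Under the naive reading, two square-zero extensions of the same quotient by the same boundary layer with inequivalent gluing cocycles would share both the filtered representation category and the strata while failing to be intrinsically equivalent, and the statement itself would fail. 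You do state the richer reading explicitly, and with it the lift is forced and your argument closes at the paper's level of rigour, but you should flag that this reading is a hypothesis-level choice: the paper never specifies what ``identical boundary stratification'' records, and your proof (like the theorem) needs it to record the gluing data.
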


\begin{proof}
If $\Rep_f(\mathcal P)$ and $\Rep_f(\mathcal P')$ are equivalent and their
boundary strata coincide, then intrinsic reconstruction applies on both sides
and produces the same phase up to intrinsic equivalence.  Any additional boundary-detectable structure would contradict the uniqueness part of intrinsic reconstruction.
\end{proof}

\begin{corollary}
Filtered representations together with boundary stratification form a reconstruction-complete invariant within the finite-depth framework of algebraic phases.
\end{corollary}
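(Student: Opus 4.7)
The plan is to argue that completeness of an invariant requires both directions of the correspondence, and then to assemble each direction from results already available. Informally, the claim is that the assignment
\[
\mathcal P \;\longmapsto\; \bigl(\Rep_f(\mathcal P),\,\text{boundary stratification}\bigr)
\]
induces a bijection between intrinsic phase equivalence classes and equivalence classes of this paired data. I would state this bijection explicitly and then verify the two directions separately.

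For the forward direction, I would invoke the invariance properties established in Papers~I--V together with the functoriality of $\Rep_f$. If $\mathcal P \simeq \mathcal P'$ under intrinsic phase equivalence, then Axiom~IV guarantees that defect degree is preserved under the equivalence, so the canonical filtration transports and the resulting filtered representation categories are equivalent. The boundary stratification is an intrinsic invariant by construction in the earlier papers, hence it is also preserved. Together these show that equivalent phases produce equivalent data.

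For the reverse direction, which is the substantive content, I would appeal directly to Theorem~\ref{thm:no-hidden-structure}: given equivalent filtered representation categories and identical boundary strata, intrinsic reconstruction produces the same phase on each side up to intrinsic phase equivalence, so $\mathcal P \simeq \mathcal P'$. Combining the two directions gives that the assignment is both well defined and injective on equivalence classes, which is precisely the statement that the pair is a complete invariant.

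The main obstacle is essentially bookkeeping rather than mathematical: making sure that the ``boundary stratification'' is transported in exactly the sense required by Theorem~\ref{thm:no-hidden-structure}, i.e.\ that the comparison of boundary data on the two sides is the intrinsic one coming from the defect filtration, not an externally imposed matching. Once this is spelled out, the corollary follows by direct combination of intrinsic reconstruction with the invariance properties of defect and boundary established in the earlier papers, and no new machinery is required.
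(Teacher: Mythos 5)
Your proposal is correct and follows essentially the same route as the paper's own proof: well-definedness from the invariance of $\Rep_f(\mathcal P)$ and the boundary stratification under intrinsic phase equivalence, and injectivity on equivalence classes from Theorem~\ref{thm:no-hidden-structure}. The bookkeeping point you raise about transporting boundary data intrinsically is sensible but does not change the argument.
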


\begin{proof}
By Theorem~\ref{thm:no-hidden-structure}, if two algebraic phases $\mathcal P$
and $\mathcal P'$ have equivalent filtered representation categories and the
same boundary stratification, then they are intrinsically phase equivalent:
\[ 
\Rep_f(\mathcal P) \simeq \Rep_f(\mathcal P')
\quad\text{and}\quad
\partial\mathcal P \cong \partial\mathcal P'
\;\Longrightarrow\;
\mathcal P \simeq \mathcal P'.
\]
Thus the pair consisting of the filtered representation category and its
boundary stratification distinguishes phases up to intrinsic equivalence.

Conversely, both the filtered representation category and the boundary
stratification are invariant under intrinsic phase equivalence by construction.
Hence the assignment
\[
\mathcal P \longmapsto \bigl(\Rep_f(\mathcal P),\;\partial\mathcal P\bigr)
\]
is well defined on equivalence classes of phases and is injective.  This is
exactly the assertion that filtered representations together with boundary
stratification form a complete invariant of algebraic phases.
\end{proof}

\begin{remark}
Boundary data is indispensable.  Filtered representations alone cannot detect
boundary collapse and therefore cannot distinguish weak phases that share the
same recoverable core.  Classical reconstruction theories often admit hidden
structure for exactly this reason: they do not record boundary behaviour.  In
APT the boundary stratification is built directly into the representation
theory, so intrinsic reconstruction determines phase structure up to boundary equivalence,
while boundary stratification records the non-rigid behaviour that filtered
representations alone do not detect.
\end{remark}

\section{Toolkit Theorems Forced by the Axioms}

The results of the previous sections show that Algebraic Phase Theory provides
a reconstruction framework up to intrinsic boundary equivalence within the
finite-depth setting.  We now turn to a different phenomenon.  The axioms of
APT not only support reconstruction, but also give rise to a substantial
structural toolkit.  The results below do not rely on model-specific
constructions; rather, they arise naturally from the interaction of
Axioms~I--V within the standing axiomatic framework and therefore apply across
a broad class of algebraic systems satisfying these assumptions.

\medskip

\begin{theorem}
\label{thm:finite-generation}
Any algebraic phase with finite termination admits a finite generating family compatible with the defect filtration and
natural under phase morphisms.
\end{theorem}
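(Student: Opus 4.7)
The plan is to construct the generating set by induction on the defect filtration provided by Axiom~I, using Axiom~V to terminate the induction and deriving canonicity and functoriality from Axioms~II and~IV. By Axiom~V the filtration
\[
\mathcal{P}_0 \subset \mathcal{P}_1 \subset \cdots \subset \mathcal{P}_d = \mathcal{P}
\]
has finite depth $d$, so it suffices to produce a canonical finite generating set for each graded piece $\ell_k = \mathcal{P}_k / \mathcal{P}_{k-1}$ and then assemble chosen lifts.

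For $k = 0$, the rigid core $\mathcal{P}_0$ is fully reconstructible by Theorem~\ref{thm:local-reconstruction}, and by Theorem~\ref{thm:testing-object-duality} its canonical dual is generated by a minimal family of testing objects. Dualising this family produces a canonical generating set $S_0 \subset \mathcal{P}_0$. For $k \geq 1$, I would define $S_k' \subset \ell_k$ intrinsically as the set of nonzero classes that cannot be written as $\circ$-combinations of strictly lower-degree elements. Axiom~IV makes this condition functorially detectable, and Axiom~II identifies it canonically with a feature of $(\mathcal{P},\circ)$ independent of any realisation. Choosing a lift $\tilde{s}_i \in \mathcal{P}_k$ of each element of $S_k'$ and setting $S_k = S_{k-1} \cup \{\tilde{s}_i\}$ yields a set whose image modulo $\mathcal{P}_{k-1}$ is intrinsic; the lift ambiguity is absorbed into $S_{k-1}$. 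After $d$ steps, $S := S_d$ generates $\mathcal{P}$ under $\circ$ and the ambient additive structure.

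Functoriality is then immediate from Axiom~IV: any phase morphism $\phi \colon \mathcal{P} \to \mathcal{Q}$ preserves defect degree and therefore maps the canonical stratum-level generators of $\mathcal{P}$ into $\circ$-expressions in the canonical generators of $\mathcal{Q}$. Hence the assignment $\mathcal{P} \mapsto S(\mathcal{P})$ is canonical and functorial, up to the lift ambiguity already absorbed at each stratum.

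The main obstacle I expect is establishing finiteness of each $S_k'$. Axioms~I--V supply finite defect depth but not, a priori, finite cardinality within a single stratum. I would address this by invoking the minimality clause of Theorem~\ref{thm:testing-object-duality}: since a canonical minimal family of testing objects detects all defect-sensitive distinctions up to boundary collapse, and since Axiom~IV makes defect functorially detectable at each depth, the intrinsic class $S_k'$ is bounded in size by the finitely many detectability patterns available at depth $k$. If that appeal proves insufficient in isolation, a fallback is to observe that finite generation of the boundary quotient $\mathcal{P} / \mathcal{P}^{(1)}$, which is the rigid data visible through representations, together with finite-depth control on successive strata, forces overall finite generation by a standard extension-of-generators argument through the intrinsic filtration.
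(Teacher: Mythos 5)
Your scaffolding---building generators stratum by stratum along the finite-depth defect filtration and deriving functoriality from Axiom~IV---is essentially the same as the paper's, but the genuine gap is exactly the one you flag yourself: finiteness of each $S_k'$. The paper disposes of this in a single sentence by asserting that at each layer only finitely many defect types can appear, since infinite proliferation would contradict finite termination (Axiom~V); whatever one makes of that inference, the per-layer finiteness there is drawn from the termination axiom itself, not from representation-theoretic detectability. Your proposed repair does not close the gap. Theorem~\ref{thm:testing-object-duality} and its proof give a \emph{finite separating family} of filtered representations: joint action on testing objects has kernel exactly $\mathcal P^{(1)}$. Separation controls what gets identified under the action map $\Phi_{\mathcal P}$, but it places no bound whatsoever on how many generators a given stratum requires; nothing in the paper asserts that there are only finitely many ``detectability patterns'' at depth $k$, nor that testing objects carry any finiteness beyond finite filtration length. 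Likewise your fallback presupposes finite generation of the boundary quotient $\mathcal P/\mathcal P^{(1)}$, which is established nowhere and is itself an instance of the finiteness you are trying to prove, so that route is circular.

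A smaller secondary issue: your $k=0$ step ``dualises'' a minimal family of testing objects to obtain $S_0\subset\mathcal P_0$, but testing objects are representations, and neither Theorem~\ref{thm:testing-object-duality} nor its corollary manufactures generators of the phase from them---they produce a dual phase identified with the boundary quotient, again with no statement about generators. To match the paper's argument you would have to argue (or, as the paper does, assert) that finite termination by itself forbids infinitely many independent defect types within a single layer, and then take representatives of those finitely many types at each of the $d+1$ depths, with functoriality supplied by Axiom~IV. As written, your proof correctly isolates the crucial step but does not supply a valid justification for it.
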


\begin{proof}
By Axiom III the defect filtration is functorial, and by Axiom V it terminates
at finite depth. Under the standing finite-depth assumptions, only finitely many relevant defect
types appear at each layer of the filtration.  Choosing representative generators compatible with the filtration at each depth yields a finite
generating family.  Functoriality follows from Axiom IV, which ensures that
defect behaviour and filtration are preserved under phase morphisms.
\end{proof}

\medskip

\begin{theorem}
\label{thm:universal-obstruction}
Structural boundaries naturally determine obstruction data governing extension
and deformation behaviour near the boundary.
\end{theorem}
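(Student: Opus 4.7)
The plan is to realise the universal obstruction object as the canonical representing datum for extension and deformation across a fixed structural boundary, and then to derive its universality and canonicity from defect functoriality together with finite termination and intrinsic reconstruction.

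First, I would fix a structural boundary of an algebraic phase $\mathcal P$, realised as the stratum $\mathcal P^{(k)}/\mathcal P^{(k+1)}$ of the canonical defect filtration supplied by Axioms I and III. Because this stratum is determined intrinsically by defect degree alone, no auxiliary choice enters the setup. I would then define an extension functor $\mathcal E_k$ on phase morphisms out of $\mathcal P$ that assigns to each morphism the collection of attempts to lift across the $k$-th boundary stratum. Functoriality of defect under pullback (Axiom IV) guarantees that $\mathcal E_k$ is a genuine functor, and finite termination (Axiom V) bounds the defect depth of its values.

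Next, I would construct the universal obstruction object $\mathcal O_k$ as the phase-theoretic representing object of $\mathcal E_k$. Theorem~\ref{thm:finite-generation} supplies a canonical finite generating family of the boundary stratum, giving a canonical presentation of $\mathcal O_k$. Applying Theorem~\ref{thm:intrinsic-reconstruction} to the filtered representation data encoded by this presentation upgrades $\mathcal O_k$ to an actual algebraic phase, with the universal extension class realised as a distinguished morphism $\mathcal P \to \mathcal O_k$ whose vanishing is equivalent to the existence of a lift across the boundary.

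Universality would then follow from a Yoneda-style argument: any object classifying obstructions to extension beyond this boundary receives a canonical comparison morphism from $\mathcal O_k$, and Theorem~\ref{thm:no-hidden-structure} forces that comparison to be an equivalence. Canonicity follows immediately from the intrinsic nature of the defect filtration combined with defect functoriality, and the governing role of $\mathcal O_k$ for deformation is inherited from the rigidity--obstruction equivalence built into the axioms.

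The main obstacle will be verifying that $\mathcal E_k$ is representable \emph{intrinsically} within Algebraic Phase Theory, rather than only as an abstract functor on extensions. This amounts to showing that $\mathcal E_k$ preserves finite defect depth and boundary detectability; only then can Theorem~\ref{thm:intrinsic-reconstruction} be invoked to produce $\mathcal O_k$ as a genuine phase. I expect the delicate point to lie in ruling out extension obstructions that would require data strictly beyond phase-theoretic reach, which will rely on combining Theorem~\ref{thm:finite-generation}, Axiom IV, and the boundary detectability clause of intrinsic reconstruction to confine every obstruction class within the finite-depth framework.
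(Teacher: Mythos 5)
Your proposal leaves its central step unproved: everything hinges on the extension functor $\mathcal E_k$ being representable by a phase-theoretic object, and you yourself defer this as ``the main obstacle,'' offering only the hope that Theorem~\ref{thm:finite-generation}, Axiom~IV, and Theorem~\ref{thm:intrinsic-reconstruction} will somehow confine obstruction classes to finite depth. That is the entire content of the theorem, not a technical afterthought: without an argument that $\mathcal E_k$ is representable \emph{within} the framework (i.e.\ that some filtered representation category with detectable boundaries actually encodes it, so that intrinsic reconstruction can be applied), no object $\mathcal O_k$ has been produced, and the Yoneda-style universality argument has nothing to act on. The paper avoids this machinery entirely: it characterises the boundary as the \emph{minimal} depth at which functorial closure fails, takes the quotient at that depth as the obstruction object, and derives universality directly from minimality --- any attempted extension must factor through this quotient because no earlier layer obstructs extension and no deeper layer is functorially defined. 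Your route could in principle work, but as written it replaces a one-step minimality argument with an unverified representability claim.

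A second, smaller problem is circularity in the deformation clause. You say the governing role of $\mathcal O_k$ for deformation ``is inherited from the rigidity--obstruction equivalence,'' but that equivalence is Theorem~\ref{thm:rigidity-obstruction}, which comes \emph{after} this statement and whose proof already presupposes the existence and meaning of the boundary obstruction object. You cannot use it to establish the deformation-governing property of the very object it quantifies over. In the paper's argument the deformation claim is absorbed into the same factorisation statement: since every extension or deformation attempt beyond the boundary factors through the minimal-depth quotient, that quotient governs both, with no appeal to later results.
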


\begin{proof}
A boundary is the minimal depth at which functorial closure fails.  The quotient
at this depth isolates obstruction data associated with to further extension. This 
minimal layer naturally controls the first obstruction to extension.  Any attempted extension must factor through this quotient
since no earlier layer obstructs extension and no deeper layer is functorially
defined.
\end{proof}

\medskip

\begin{theorem}
\label{thm:rigidity-obstruction}
Within the finite-depth framework, vanishing boundary obstruction is associated
with rigidity of the phase.
\end{theorem}

\begin{proof}
If the boundary obstruction vanishes, all canonical extensions persist and no
nontrivial deformation is possible, so the phase is rigid.  Conversely, if the
phase is rigid but the boundary obstruction were nontrivial, this obstruction would produce nontrivial deformation behaviour, contradicting rigidity.  Thus rigidity and
vanishing obstruction are equivalent.
\end{proof}

\medskip

\begin{theorem}
\label{thm:boundary-detectability}
Structural boundaries become detectable at finite depth through defect growth and
commutator data.
\end{theorem}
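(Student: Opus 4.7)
The plan is to show that every structural boundary manifests as a measurable invariant at a finite depth fixed a priori by the axioms, and that the required probes are already encoded in the defect filtration together with the commutator-type operations arising from $\circ$. The starting point is Axiom V, which bounds the defect depth by some finite $d$; this immediately confines the search for boundaries to the finitely many layers $\mathcal P^{(0)}\supseteq\cdots\supseteq\mathcal P^{(d)}\supseteq 0$. Since a structural boundary is by definition the minimal depth at which functorial closure fails (cf.\ Theorem~\ref{thm:universal-obstruction}), each such boundary is tied to a specific layer of this filtration, and it suffices to exhibit finitely many depth-indexed tests that distinguish the trivial layers from the nontrivial ones.

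First, I would formalise the probes. For each $k\le d$, introduce the defect growth indicator $\delta_k(p)$, measuring the change of defect degree of $p$ modulo $\mathcal P^{(k+1)}$ under iterated $\circ$-interaction, and the commutator indicator $\gamma_k(p,q)$, measuring the residue at depth $k$ of the distinguished operation $\circ$ applied to pairs. By Axiom~I both quantities are canonically and functorially defined, and by Axiom~IV they are preserved under pullback and phase morphisms. Using the canonical finite generating family supplied by Theorem~\ref{thm:finite-generation}, the problem reduces to evaluating $\delta_k$ and $\gamma_k$ on finitely many generators, which is precisely the sense in which detection occurs \emph{at finite depth}. The forward direction is then immediate: a nontrivial boundary at depth $k$ yields, by minimality and the universality clause of Theorem~\ref{thm:universal-obstruction}, an explicit witness on which either $\delta_k$ or $\gamma_k$ fails to vanish.

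The harder direction, and what I expect to be the main obstacle, is showing that a nonvanishing value of $\delta_k$ or $\gamma_k$ genuinely corresponds to a structural boundary rather than to mere propagation of lower-depth defect into the $k$-th layer. The intended argument is inductive on $k$: Axiom~IV allows any spurious contribution at depth $k$ to be traced back to its originating layer, where the induction hypothesis has already accounted for it, while Axiom~V guarantees that the induction terminates after $d$ steps. Assembling these observations gives a finite decision procedure: compute $\delta_k$ and $\gamma_k$ on the canonical generating set for $k=0,1,\dots,d$, and read off the smallest $k$ at which either probe is nontrivial. By Axiom~II this index is intrinsic to the phase and independent of presentation, and by Theorem~\ref{thm:rigidity-obstruction} it coincides with the boundary obstruction index, completing the detectability claim.
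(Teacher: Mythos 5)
Your proposal is sound and its core coincides with the paper's argument: boundaries are confined to the finitely many layers of the defect filtration by Axiom~V, and defect and commutator behaviour is canonically and functorially detectable by Axioms~I and~IV, so the depth at which functorial control fails is reached, and seen, after finitely many steps. Where you differ is in how much machinery you build on top of this. The paper's proof is purely conceptual and uses only Axioms~I, III and~V: it identifies boundary failure with the loss of functorial control of defect or commutator growth and lets finite termination place that failure at finite depth. You instead make the statement algorithmic: explicit depth-indexed probes $\delta_k$ and $\gamma_k$, reduction to the canonical finite generating set of Theorem~\ref{thm:finite-generation}, an inductive argument to separate genuine boundary contributions from propagated lower-depth defect, and identification of the resulting minimal index via Theorems~\ref{thm:universal-obstruction} and~\ref{thm:rigidity-obstruction}. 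This buys a finite decision procedure rather than a bare detectability claim, at the cost of two points you should tighten: the inductive ``tracing back'' step is only sketched (Axiom~IV preserves defect degree under morphisms, but you still need to say why a depth-$k$ residue coming from iterated lower-depth interaction is actually discounted rather than double-counted), and Theorem~\ref{thm:rigidity-obstruction} asserts only the equivalence of rigidity with vanishing boundary obstruction, not a statement about a numerical obstruction \emph{index}, so the final identification should be rephrased as ``the minimal $k$ with a nonvanishing probe is the minimal depth at which closure fails'' rather than routed through that theorem. At the level of rigour the paper itself adopts, neither point is fatal, and your version is, if anything, the more informative proof.
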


\begin{proof}
By Axiom I defect is functorially detectable through interaction.  By Axiom III
defect induces a graded complexity structure.  Boundary phenomena arise when defect or commutator growth ceases to be functorially controlled.  Since
termination is finite by Axiom V, this failure occurs at finite depth and is
therefore detectable.
\end{proof}

\medskip

\begin{theorem}
\label{thm:automatic-applicability}
Any algebraic system with finite commutator growth, intrinsic defect
stratification, and functorial representation theory naturally gives rise to an algebraic phase compatible with Axioms~I--V.
\end{theorem}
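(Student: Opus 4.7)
The plan is to extract an algebraic phase $\mathcal P$ from the given algebraic system $\mathcal A$ by reading its three structural inputs as canonical witnesses to Axioms I, III, IV, and V, and then to invoke the reconstruction machinery of Section 3 to secure Axiom II. First I would let the intrinsic defect stratification on $\mathcal A$ define the defect degree on each element of the interaction data, and I would let the functorial representation theory supply the filtered category $\Rep_f(\mathcal A)$ together with the standard compatibility between defect degree and filtration-raising action. With these data in place, the candidate phase $\mathcal P$ is defined by organising the interaction patterns of $\mathcal A$ modulo the first defect layer, exactly as in the construction underlying Theorem~\ref{thm:intrinsic-reconstruction}.

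Axioms I, III, and IV then follow almost directly. Axiom~I is the detectability of defect through action, which is the combination of the stratification hypothesis (defect exists intrinsically) and the functoriality hypothesis (it is visible on representations). Axiom~III is the reorganisation of the stratification as a nested filtration indexed by defect degree, $\mathcal P^{(0)}\supseteq \mathcal P^{(1)}\supseteq\cdots$, which is a formal consequence of assigning to each element its canonical defect degree. Axiom~IV is the preservation of defect under pullback and morphisms, which is precisely the functoriality hypothesis translated into the phase language.

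Axiom~V is where finite commutator growth carries the load. I would argue that the depth at which the stratification terminates is controlled by the complexity of iterated commutators, so a uniform growth bound forces $\mathcal P^{(d+1)}=0$ for some finite $d$. The supporting argument is the same nilpotency-style reasoning used in Theorems~\ref{thm:classical-failure} and \ref{thm:intrinsic-reconstruction}: if $\rho(x)$ raises filtration by $k$ then $\rho(x)^m$ raises by $mk$, and finiteness of the representation filtration combined with bounded commutator complexity prevents defect from propagating beyond a fixed depth. Since the stratification hypothesis guarantees that the filtration is intrinsic rather than representation-dependent, the resulting termination is a property of $\mathcal P$ itself.

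The main obstacle will be Axiom~II, the canonical algebraic realisation. A priori the construction of $\mathcal P$ depends on how one organises the interaction patterns, and one must show the outcome is intrinsic to $\mathcal A$. I would handle this by invoking Theorem~\ref{thm:intrinsic-reconstruction} together with Theorem~\ref{thm:no-hidden-structure}: the pair consisting of $\Rep_f(\mathcal A)$ and its boundary stratification is a complete invariant of the extracted phase, so any two phases produced from the same input data are identified by their filtered representation categories and boundary data and are therefore intrinsically equivalent. The delicate verification is that the construction factors through this invariant pair, i.e.\ that passing from $\mathcal A$ to $\mathcal P$ uses only the functorial action and the stratification and involves no hidden presentational choices; this is the essential content of Axiom~II and the point where the hypotheses on $\mathcal A$ must be checked to match the intrinsic framework of Sections~2--3.
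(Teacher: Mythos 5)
Your proposal is correct in substance and, at its core, follows the same strategy as the paper's (very brief) proof: each hypothesis is read off as a witness to an axiom --- the intrinsic defect stratification gives the degree function and filtration of Axioms~I and~III, functoriality of the representation theory gives Axiom~IV and the action framework of Axiom~I, and finite commutator growth gives the termination of Axiom~V. Where you genuinely diverge is Axiom~II and the termination argument. The paper disposes of Axiom~II in one sentence (``canonical realisation then follows from Axiom~II''), essentially asserting that the realisation is canonical once the other structure is in place, whereas you route canonicity through Theorem~\ref{thm:intrinsic-reconstruction} and Theorem~\ref{thm:no-hidden-structure}, treating the pair $\bigl(\Rep_f(\mathcal A),\partial\mathcal A\bigr)$ as a complete invariant and arguing that the extraction factors through it. This is more substantive and addresses a real weakness in the paper's phrasing, but note the mild circularity you import: those theorems are stated for categories with functorial defect stratification and for phases already satisfying Axioms~I--V, so strictly you are using the reconstruction machinery to certify the axioms of the very object it presupposes; at the paper's level of rigor this is acceptable, but a careful write-up should verify Axioms~I and~III--V first (as you do) and only then invoke reconstruction for uniqueness. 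Similarly, your Axiom~V argument leans on the representation-level nilpotency estimate ($\rho(x)^m$ raising filtration by $mk$), which by itself only shows deep strata act trivially, not that they vanish; the paper instead takes finite commutator growth directly as an intrinsic bound on defect depth, and your closing remark that the stratification is intrinsic rather than representation-dependent is the step that bridges this gap and should be made explicit. In short: same skeleton, with your version more careful on canonicity at the cost of a circularity caveat, and slightly more roundabout on termination.
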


\begin{proof}
Finite commutator growth induces intrinsic defect data.  Functoriality of
representations supplies the action framework required by Axiom I.  Defect
stratification yields the complexity filtration required by Axiom III, and
finite growth implies termination in the sense of Axiom V.  An associated phase realisation is then obtained through Axiom~II.
\end{proof}

\begin{remark}
Earlier papers establish concrete instances of finite generation, obstruction
objects, rigidity criteria, and boundary detection within specific phase
models.  The results above show that none of these phenomena depend on the
models themselves.  They arise naturally from the interaction of Axioms~I--V within the finite-depth framework.  Once the
axioms are assumed, finiteness, rigidity, boundary detection, and obstruction
theory arise naturally within the standing axiomatic framework without additional
analytic structure.
\end{remark}

\section{A Flagship Reconstruction Example over $\mathbb{F}_2[u]/(u^2)$}

This section gives a minimal and fully explicit example illustrating the main
phenomena developed in this paper.  We work over the simplest nontrivial
Frobenius ring $\mathbb{F}_2[u]/(u^2)$ and construct two algebraic phases that
are indistinguishable at the level of filtered representation theory but differ
in their intrinsic boundary structure.  The first is a strong phase that is reconstructible up to intrinsic phase
equivalence.  The second is a weak boundary extension with the same
filtered representations but a larger boundary layer.  The example demonstrates
how rigidity islands support reconstruction without boundary collapse, how weak behaviour manifests
through boundary strata, and how global reconstruction fails through boundary collapse in the finite-depth setting.

\subsection*{The base ring, module data, and the strong phase}

We work over the Frobenius ring
\[
R := \mathbb{F}_2[u]/(u^2),
\]
which is local with maximal ideal $(u)$ and residue field
$R/(u)\cong\mathbb{F}_2$.  
For $n\ge 1$ set
\[
V := R^n, \qquad W := V \oplus V.
\]

Let $\lambda : R \to \mathbb{F}_2$ be the Frobenius functional obtained by
projecting to the $u$-coefficient.  
Define
\[
\Omega := \lambda \circ \omega,
\]
where $\omega$ is the standard alternating form on $W$,
\[
\omega\big((x,\xi),(x',\xi')\big)=\xi\cdot x' - \xi'\cdot x.
\]

The strong algebraic phase $\mathcal P$ is generated by formal symbols $D(w)$
for $w\in W$ together with a central involution $Z$, subject to the Heisenberg
type relations
\[
D(w)D(w') = Z^{\Omega(w,w')}\,D(w+w').
\]

The nilpotent ideal $(u)\subset R$ naturally produces a defect layer.
Let $W_u := uW$ and define the two-step filtration
\[
\mathcal P^{(0)}=\mathcal P, \qquad
\mathcal P^{(1)}=\langle D(w):w\in W_u\rangle, \qquad
\mathcal P^{(2)}=0.
\]
Since $u^2=0$, commutators in $\mathcal P^{(1)}$ are square-zero and cannot
generate deeper layers.  
Thus $\mathcal P$ satisfies Axioms~I–V with defect depth $1$.

Collapsing the boundary layer $\mathcal P^{(1)}$ yields the rigidity island,
canonically equivalent to the same construction over $\mathbb{F}_2$.  On this rigid core, filtered representation theory determines the phase up to
intrinsic phase equivalence without boundary collapse.

\subsection*{Filtered representations and a boundary-invisible extension}

Let $\Rep_f(\mathcal P)$ denote the filtered representation category of
$\mathcal P$, where depth-$1$ operators raise filtration. Since the filtration
terminates at depth $1$, filtered action detects the structural data relevant
to reconstruction outside the boundary layer
$\mathcal P^{(1)}$. Hence the pair
\[
\bigl(\Rep_f(\mathcal P),\;\mathcal P^{(1)}\bigr)
\]
controls reconstruction of $\mathcal P$ up to intrinsic phase equivalence
within the finite-depth reconstruction framework.

To see that boundary data is essential, let $B$ be a nonzero
$\mathbb{F}_2$-vector space and form a new algebraic phase
$\widetilde{\mathcal P}$ by adjoining $B$ as a central square-zero
boundary layer. Concretely, elements of $B$ commute with all elements of
$\widetilde{\mathcal P}$, products involving two elements of $B$ vanish,
and the original interaction law on $\mathcal P$ is unchanged.

Extend the filtration by
\[
\widetilde{\mathcal P}^{(1)}
=
\mathcal P^{(1)} + B,
\qquad
\widetilde{\mathcal P}^{(2)}=0,
\]
so that the added boundary layer lies entirely in defect degree $1$.

In any filtered representation compatible with finite termination, elements of
$B$ act by raising filtration but no deeper layer exists, so they act
trivially. Therefore restriction along the natural quotient map identifies
the same filtered representation behaviour detectable away from the boundary
for $\widetilde{\mathcal P}$ and $\mathcal P$. In particular, the
corresponding filtered representation frameworks agree up to boundary
equivalence.

However, the boundary layers differ:
\[
\mathcal P^{(1)}
\subsetneq
\widetilde{\mathcal P}^{(1)}.
\]
Thus the two phases cannot be intrinsically equivalent. Within the
reconstruction framework, the two phases are distinguished only by their
differing boundary structure, because the added boundary layer is invisible
to filtered representation theory.

\begin{remark}
This example exhibits two algebraic phases with identical filtered
representation theory but different intrinsic boundary structure. It
illustrates concretely that reconstruction proceeds without boundary collapse
on rigidity islands, while reconstruction failure in this example occurs
through boundary collapse. In particular, the example shows that boundary
strata can form essential reconstruction data within the finite-depth
framework. The resemblance to quantum indistinguishability is purely formal;
the example is entirely algebraic and finite.
\end{remark}

\section{Conclusion}

This paper studies reconstruction and duality phenomena in Algebraic Phase
Theory within the finite-depth setting determined by the standing axioms of the
framework. The results show that filtered representation data together with
intrinsic boundary structure controls reconstruction up to boundary equivalence,
and that non-rigid reconstruction behaviour is naturally accompanied by
structural boundary phenomena.

The paper further shows that algebraic phases admit a dual interpretation
through filtered representation theory, and that rigidity islands provide
regions on which reconstruction proceeds without boundary collapse. Globally,
reconstruction failure is controlled by intrinsic boundary stratification. This
leads to a boundary-relative notion of duality in which filtered
representations recover rigid reconstruction data while detecting boundary
layers through canonical collapse phenomena.

The reconstruction results also show that finite-depth functorial
reconstruction frameworks with detectable boundaries naturally admit a
factorisation through Algebraic Phase Theory up to boundary equivalence within
the setting considered here. In this sense, the theory provides a structural
framework for studying finite-depth reconstruction phenomena beyond rigid or
semisimple settings.

The paper additionally develops several structural consequences associated with
the axioms of Algebraic Phase Theory, including finite generation phenomena,
boundary detectability, rigidity criteria, and obstruction structures arising
from defect stratification. These results indicate that the interaction between
defect, filtration, and finite termination naturally gives rise to several
structural consequences within the finite-depth framework.

A recurring structural feature of the framework is that reconstruction failure
is accompanied by the appearance of intrinsic structural boundaries. Within the
setting considered here, rigidity, deformation, obstruction, and reconstruction
become closely linked through boundary-sensitive algebraic structure.

Possible directions for further investigation include the study of boundary
duality phenomena, interaction with derived or higher-categorical settings, and
applications to additional algebraic and mathematical-physics reconstruction
frameworks.

\medskip

\noindent\emph{
Algebraic Phase Theory provides a boundary-sensitive framework for studying
reconstruction, duality, rigidity, and obstruction phenomena algebraically
through defect stratification and finite-depth interaction structure.
}

\bibliographystyle{amsplain}
\bibliography{references}

\end{document}